\newtheorem{theorem}{Theorem}
\newtheorem{lemma}[theorem]{Lemma}
\newtheorem{definition}{Definition}[section]
\newtheorem{example}{Example}
\title{\LARGE{Insights into Weighted Sum Sampling Approaches for  \protect\\ Multi-Criteria Decision Making Problems}}
\author{
  \begin{minipage}[t]{0.45\textwidth}
    \centering
    {\Large Aled Williams}\\
    Department of Mathematics\\
    London School of Economics and Political Science\\
    London, UK\\
    \texttt{a.e.williams1@lse.ac.uk}
  \end{minipage}%
  \hfill
  \begin{minipage}[t]{0.45\textwidth}
    \centering
    {\Large Yilun Cai}\\
    Department of Statistics\\
    University of Chicago\\
    Chicago \\ Illinois, USA\\
    \texttt{yiluncai@uchicago.edu}
  \end{minipage}
}
\begin{document}
	\date{}
	\maketitle

\begin{abstract}
In this paper we explore several approaches for sampling weight vectors in the context of weighted sum scalarisation approaches for solving multi-criteria decision making (MCDM) problems. This established method converts a multi-objective problem into a (single) scalar optimisation problem. It does so by assigning weights to each objective. We outline various methods to select these weights, with a focus on ensuring computational efficiency and avoiding redundancy. The challenges and computational complexity of these approaches are explored and numerical examples are provided. The theoretical results demonstrate the trade-offs between systematic and randomised weight generation techniques, highlighting their performance for different problem settings. These sampling approaches will be tested and compared computationally in an upcoming paper. 

\vspace{1.0mm}
\noindent \textbf{Keywords}: multi-criteria decision making problems, weighted sum method, weight sampling, Pareto efficiency,  Pareto frontier, nondominated points.
\end{abstract}


\section{Introduction}
\subsection{Multi-Criteria Decision Making Problems}
We consider multi-criteria optimisation problems of form
\begin{equation} \label{multi_problem}
\begin{aligned}
\text{\enquote{minimise}} \quad \big( f_1(\boldsymbol{x}), f_2(\boldsymbol{x}), &\ldots, f_p(\boldsymbol{x}) \big) \\
\text{subject to } \quad \boldsymbol{x} &\in \mathcal{X},
\end{aligned}
\end{equation}
where $\mathcal{X} \subset \mathbb{R}^n$ denotes the \textit{feasible set} (or set of alternatives of the decision problem) and $f_i : \mathbb{R}^n \rightarrow \mathbb{R}$ for each $i \in \{1,2,\ldots, p\}$. Let us assume, for simplicity of presentation, that each objective $f_i$ is linear. It should be noted that each objective function $f_i(\boldsymbol{x})$ represents a different criterion (or aspect) of the decision making problem. The aim is to minimise the $p$ objective functions simultaneously, which typically involves a trade-off between objectives. 

\subsection{Efficiency and Nondominance}
To clarify \enquote{minimise} in \eqref{multi_problem}, we formally define efficient solutions and nondominated points, which are defined by the component-wise order over the $p$ objectives. Let $\mathcal{X}$ denote the feasible set of solutions to the above problem. Further, denote by $\mathcal{Y}:=c(\mathcal{X})$ the objective function mapping of the feasible set $\mathcal{X}$, where $c = (\boldsymbol{c}_1, \boldsymbol{c}_2, \ldots, \boldsymbol{c}_p)$ for $\boldsymbol{c}^T_i \in \mathbb{R}^n$ for each $i$. Note that $\mathcal{X} \subset \mathbb{R}^n$ and $\mathcal{Y} \subset \mathbb{R}^p$. 

\begin{definition}
A feasible solution $\boldsymbol{x}^* \in \mathcal{X}$ is called efficient (or Pareto optimal) if there is no other $\boldsymbol{x} \in \mathcal{X}$ such that $c(\boldsymbol{x}) \le c(\boldsymbol{x}^*)$, i.e. no other feasible $\boldsymbol{x}$ satisfies $\boldsymbol{c}_i^T \boldsymbol{x} \le \boldsymbol{c}_i^T \boldsymbol{x}^*$ for all $i \in \{1,2,\ldots, p\}$ and $\boldsymbol{c}_j^T \boldsymbol{x} < \boldsymbol{c}_j^T \boldsymbol{x}^*$ for at least one $j \in \{1,2,\ldots, p\}$. If $\boldsymbol{x}^*$ is efficient, then $c(\boldsymbol{x}^*)$ is called a nondominated point. 
\end{definition}

In other words, a solution $\boldsymbol{x}^*$ is efficient if there is no $\boldsymbol{x} \in \mathcal{X}$ such that 
$$
\boldsymbol{c}_k^T \boldsymbol{x} \le \boldsymbol{c}_k^T \boldsymbol{x}^* \text{ for } k=1,2,\ldots,p
$$ 
and 
$$
\boldsymbol{c}_l^T \boldsymbol{x} < \boldsymbol{c}_l^T \boldsymbol{x}^* \text { for some } l \in \{1,2,\ldots,p\}.
$$ 
Informally, an efficient solution is a solution that cannot be improved in any of the objectives without degrading at least one of the other objectives. Thus, the fundamental importance of efficiency lies in the fact that any solution that is not efficient cannot represent the most preferred alternative for a decision maker. Next, let us define weakly efficient solutions and nondominated points.

\begin{definition}
A feasible solution $\boldsymbol{x}^* \in \mathcal{X}$ is called weakly efficient (or weakly Pareto optimal) if there is no other $\boldsymbol{x} \in \mathcal{X}$ such that $c(\boldsymbol{x}) < c(\boldsymbol{x}^*)$, i.e. no feasible $\boldsymbol{x}$ satisfies $\boldsymbol{c}_i^T \boldsymbol{x} < \boldsymbol{c}_i^T \boldsymbol{x}^*$ for all $i \in \{1,2,\ldots, p\}$. If $\boldsymbol{x}^*$ is weakly efficient, then $c(\boldsymbol{x}^*)$ is called weakly nondominated.
\end{definition}

It follows from the above definitions that 
$$
\mathcal{Y}_N \subset \mathcal{Y}_{w N} \subset \mathcal{Y} \subset \mathbb{R}^p 
$$
and 
$$
\mathcal{X}_E \subset \mathcal{X}_{w E} \subset \mathcal{X} \subset \mathbb{R}^n,$$
where $\mathcal{Y}_N$, $\mathcal{Y}_{w N}$, $\mathcal{X}_E$ and $\mathcal{X}_{wE}$ denote the set of all nondominated points, weakly nondominated points, efficient solutions and weakly efficient solutions, respectively. Informally, a weakly efficient solution is a solution for which there is no way to improve every objective simultaneously while remaining feasible. Note that the images $\mathcal{Y}_N$ and $\mathcal{Y}_{wN}$ are often called the Pareto frontier (or the Pareto front or nondominated front) and the weak Pareto frontier, respectively. 


\subsection{An Introduction to Weighted Sum Scalarisation}
The traditional approach to solving problems with multi-criteria such as \eqref{multi_problem} is by scalarisation, which involves formulating a single objective optimisation problem that is related to the multi-criteria problem. We begin by outlining one of the most commonly applied scalarisation techniques, namely the weighted sum scalarisation approach, before discussing more formal details around weight selection later. To introduce the method, let us once more denote by $\mathcal{X}$ the feasible set of solutions to problem \eqref{multi_problem}. 

\vspace{2.0mm}

It is important to emphasise that our study is motivated by the growing need for explainable, non-black-box methods in decision-making, in alignment with legislative demands and stakeholder transparency expectations within operational research applications.

\vspace{2.0mm}

The weighted sum method (WSM) converts the original problem to 
\begin{equation} \label{WSM_Definition}
\min _{\boldsymbol{x} \in \mathcal{X}} \, \sum_{i=1}^p \lambda_i \, \boldsymbol{c}_i^T\boldsymbol{x} = \lambda_1 \boldsymbol{c}_1^T \boldsymbol{x} + \cdots + \lambda_p \boldsymbol{c}_p^T \boldsymbol{x}, 
\end{equation}
where $\sum_{k=1}^p \lambda_i=1$ and $\lambda_i \ge 0$ for all $i \in \{1,2,\ldots,p\}$. Note that this approach converts the $p$ objectives into an aggregated scalar objective function by assigning each objective function a weighting factor, before summing yields the overall (single) objective function. Each (original) objective is given a weight to denote its relative importance during the overall aggregation. The method enables the computation of weakly efficient solutions by successively varying the weights $\lambda_i$ for convex problems. Being a little more precise, the following result connecting convexity with efficient solutions is known (see e.g. \cite[Theorem 3.1.4]{miettinen1999nonlinear}). 

\begin{theorem}
Suppose the multi-criteria optimisation problem \eqref{multi_problem} is convex. If $\boldsymbol{x}^*$ is efficient (or Pareto optimal), then there exists a weighting vector 
$\lambda = (\lambda_1, \lambda_2, \ldots, \lambda_p)$ with $\lambda_i \ge 0$ for each $i \in \{1,2,\ldots,p\}$ and $\sum_{i=1}^p \lambda_i = 1$ such that $\boldsymbol{x}^*$ is a solution to the problem \eqref{WSM_Definition}.
\end{theorem}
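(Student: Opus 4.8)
The plan is to work in objective space and reduce the statement to a separating-hyperplane argument. First I would exploit convexity: since each $f_i = \boldsymbol{c}_i^T \boldsymbol{x}$ is linear and the feasible set $\mathcal{X}$ is convex, the image $\mathcal{Y} = c(\mathcal{X}) \subset \mathbb{R}^p$ is convex. Writing $\boldsymbol{y}^* = c(\boldsymbol{x}^*)$, the goal reduces to finding $\lambda \neq 0$ with $\lambda \geq 0$ and $\lambda^T \boldsymbol{y}^* \leq \lambda^T \boldsymbol{y}$ for every $\boldsymbol{y} \in \mathcal{Y}$; the normalisation $\sum_i \lambda_i = 1$ is then immediate by rescaling, since $\lambda \geq 0$ and $\lambda \neq 0$ force $\sum_i \lambda_i > 0$.

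Next I would introduce the two convex sets to be separated. Let $\mathcal{Y}_{\geq} := \mathcal{Y} + \mathbb{R}^p_{\geq 0}$ (a Minkowski sum, convex because $\mathcal{Y}$ is), and let $O := \boldsymbol{y}^* - \mathbb{R}^p_{>0}$ be the open orthant of points strictly dominating $\boldsymbol{y}^*$. The key geometric claim is that $\mathcal{Y}_{\geq} \cap O = \emptyset$. Indeed, a common point would have the form $\boldsymbol{y} + \boldsymbol{d}$ with $\boldsymbol{y} \in \mathcal{Y}$, $\boldsymbol{d} \geq 0$, and strictly less than $\boldsymbol{y}^*$ in every coordinate; then $\boldsymbol{y} \leq \boldsymbol{y} + \boldsymbol{d} < \boldsymbol{y}^*$ componentwise, so $\boldsymbol{y}$ strictly dominates $\boldsymbol{y}^*$, contradicting the efficiency of $\boldsymbol{x}^*$ (efficiency implies weak efficiency, which is exactly what is invoked here).

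With two disjoint convex sets in hand, and $O$ open and nonempty, I would invoke the separating hyperplane theorem to obtain $\lambda \neq 0$ and a scalar $\alpha$ with $\lambda^T \boldsymbol{z} \geq \alpha \geq \lambda^T \boldsymbol{w}$ for all $\boldsymbol{z} \in \mathcal{Y}_{\geq}$ and $\boldsymbol{w} \in O$. Two facts must then be extracted. First, $\lambda \geq 0$: since $\mathcal{Y}_{\geq}$ contains $\boldsymbol{y}^* + t\boldsymbol{e}_i$ for every $t \geq 0$ and every coordinate direction $\boldsymbol{e}_i$, any negative component $\lambda_i < 0$ would drive $\lambda^T \boldsymbol{z} \to -\infty$, violating the lower bound $\alpha$. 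Second, $\boldsymbol{y}^*$ attains the minimum of $\lambda^T \boldsymbol{y}$ over $\mathcal{Y}$: since $\boldsymbol{y}^* \in \mathcal{Y} \subseteq \mathcal{Y}_{\geq}$ we have $\lambda^T \boldsymbol{y}^* \geq \alpha$, while $\boldsymbol{y}^*$ lies in the closure of $O$, forcing $\lambda^T \boldsymbol{y}^* \leq \alpha$; hence $\lambda^T \boldsymbol{y}^* = \alpha \leq \lambda^T \boldsymbol{y}$ for all $\boldsymbol{y} \in \mathcal{Y}$. Pulling back to decision space, $\boldsymbol{x}^*$ minimises $\sum_i \lambda_i \boldsymbol{c}_i^T \boldsymbol{x}$, which after normalisation is precisely a solution of problem \eqref{WSM_Definition}.

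I expect the main obstacle to be the careful handling of the sign and normalisation conditions rather than the separation step itself: the separation theorem yields only a non-strict inequality and a nonzero $\lambda$, so the real work lies in (i) ruling out negative components via the recession directions of $\mathcal{Y}_{\geq}$, and (ii) confirming that $\boldsymbol{y}^*$ itself, and not merely some nearby boundary point, realises the common bound $\alpha$. It is worth flagging that this argument guarantees only $\lambda \geq 0$ (some weights may vanish), so without strict positivity the scalarised problem may admit additional, merely weakly efficient optima alongside $\boldsymbol{x}^*$; this is fully consistent with the statement, which asserts only that $\boldsymbol{x}^*$ is \emph{a} solution of \eqref{WSM_Definition}.
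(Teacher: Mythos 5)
Your proof is correct, but note that the paper itself contains no proof of this theorem to compare against: it is stated as a known result, with the argument deferred to the cited reference (Miettinen, Theorem 3.1.4). What you have written is essentially that classical argument, carried out in full. Each step checks out: $\mathcal{Y}=c(\mathcal{X})$ is convex because $c$ is linear and $\mathcal{X}$ is convex; the Minkowski sum $\mathcal{Y}_{\geq}=\mathcal{Y}+\mathbb{R}^p_{\geq 0}$ is disjoint from the open orthant $O=\boldsymbol{y}^*-\mathbb{R}^p_{>0}$ precisely by (weak) efficiency, since $\boldsymbol{y}+\boldsymbol{d}<\boldsymbol{y}^*$ with $\boldsymbol{d}\geq 0$ forces $\boldsymbol{y}<\boldsymbol{y}^*$ componentwise; separation of two disjoint nonempty convex sets in $\mathbb{R}^p$ then yields $\lambda\neq 0$ (openness of $O$ is not even needed in finite dimensions); the recession-direction argument $\boldsymbol{y}^*+t\boldsymbol{e}_i\in\mathcal{Y}_{\geq}$ correctly rules out $\lambda_i<0$; and placing $\boldsymbol{y}^*$ both in $\mathcal{Y}_{\geq}$ and in the closure of $O$ pins down $\lambda^T\boldsymbol{y}^*=\alpha$, so that $\boldsymbol{y}^*$ genuinely attains the minimum before rescaling to $\sum_i\lambda_i=1$. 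Two remarks worth keeping: your observation that only weak efficiency is used means you have in fact proved the stronger statement (every weakly efficient solution of a convex problem solves \eqref{WSM_Definition} for some admissible $\lambda$), of which the theorem is a special case; and your closing caveat is the right one --- without strict positivity of the weights one cannot conclude that every minimiser of the scalarised problem is efficient, but the theorem only claims that $\boldsymbol{x}^*$ is \emph{a} minimiser, which your argument delivers.
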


Thus, the above result suggests that any Pareto optimal solution of a convex multi-criteria optimisation problem can be found via the weighted sum method. The method may however work poorly for non-convex problems, such as a multi-criteria set covering or travelling salesman problems. It should be noted that different approaches for varying the weights are outlined in the subsequent section. 

\vspace{2.0mm}

It should be noted for completeness that if the underlying problem is non-convex, then other scalarisation techniques are (perhaps) more appropriate for finding weakly efficient solutions. The celebrated $\varepsilon$-constraint method \cite{Haimes1971}, hybrid method \cite{Guddat1985}, Benson's Method \cite{benson1978existence}, or the elastic constraint method (see e.g. \cite{Ehrgott2002, tanino2003method, holder2003designing}) are examples of such scalarisation approaches. 


\section{Literature Review}
The weighted sum method (WSM) and its variants are widely used in decision-making and optimisation due to their simplicity and flexibility in evaluating multiple criteria. Applications span diverse fields such as technology, energy, urban planning, and multi-objective optimisation, showcasing its broad relevance across complex problems. The purpose of the following literature review is to summarise existing research on the use of the WSM in multi-criteria decision making (MCDM) problems.

\vspace{2.0mm}

The study, \cite{sarika2012server}, applied WSM and a revised decision model to rank servers from IBM, HP, and Sun Microsystems, considering both objective and subjective criteria to identify optimal business solutions. A review of MCDM approaches for evaluating energy storage systems, presented in \cite{baumann2019review}, considered economic, technical, and environmental factors. Research in \cite{yao2023advanced} focused on industrial informatics applications in road engineering, particularly AI-based systems designed to enhance the construction, maintenance, and safety of road infrastructure.

\vspace{2.0mm}

In \cite{lee2018comparative}, multiple MCDM methods, including WSM, were compared for ranking renewable energy sources in Taiwan, with hydropower identified as the most suitable option. An analysis presented in \cite{stanujkic2013comparative} demonstrated how various MCDM methods produce differing rankings, particularly in banking contexts. The importance of developing standardised approaches for land-use optimisation and integrating sustainability in urban planning was emphasised in \cite{rahman2021multi}.

\vspace{2.0mm}

The literature review in \cite{mofidi2020intelligent} explored intelligent building operations, highlighting the necessity of balancing energy efficiency and occupant comfort. Adaptive Weighted Sum (AWS) methods for multi-objective optimisation, aimed at improved exploration of Pareto fronts, were developed and successfully applied in \cite{kim2006adaptive}. Recent novel weighting methods for MCDM, including CILOS and MEREC, were reviewed in depth in \cite{ayan2023comprehensive}.

\vspace{2.0mm}

New decision-making methodologies for ranking non-dominated points within multi-objective optimisation problems (MOPs) were introduced in \cite{dolatnezhadsomarin2024new}. Additionally, the cascaded weighted sum method (CWS) proposed in \cite{jakob2014pareto} provided an alternative approach to traditional Pareto optimisation, particularly suitable for scheduling problems. A weighted sum-based method to enhance the detection of rare genetic variants in association tests for genetically heterogeneous diseases was presented in \cite{madsen2009groupwise}.

\vspace{2.0mm}

A review of Pareto and scalarisation techniques in multi-objective optimisation (MOO) highlighted their practicality and ease of implementation in real-world applications \cite{gunantara2018review}. In the context of financial decision-making, \cite{Grodzevich2006} demonstrated the value of normalisation techniques and the effectiveness of weighted sum approaches in portfolio optimisation. The Adaptive Weighted Sum (AWS) method was further refined in \cite{kim2005adaptive} to better address bi-objective optimisation problems.

\vspace{2.0mm}

A control-function-based method for multi-objective optimisation, introduced in \cite{augusto2012new}, reduced computational requirements by avoiding the explicit construction of the Pareto set while ensuring solution optimality. The Evolutionary Dynamic Weighted Aggregation (EDWA) approach in \cite{jin2001dynamic} improved traditional weighting schemes by dynamically adjusting weights to better capture both convex and concave regions of the Pareto front. The entropy weights method, reviewed in \cite{kumar2021revealing}, demonstrated effectiveness in machining operations and showed potential for broader application due to its objective weighting capabilities.

\vspace{2.0mm}

A comprehensive survey of evolutionary algorithms for MOPs with irregular Pareto fronts was presented in \cite{hua2021survey}, which categorised algorithmic strategies and outlined the challenges involved in solving complex, real-world problems. A benchmark suite of 16 bound-constrained multi-objective problems, including mixed-integer variants, was introduced in \cite{tanabe2020easy} to support performance evaluation of optimisation algorithms. Dynamic weight adjustment mechanisms to improve decomposition-based methods for irregular Pareto fronts were proposed in \cite{li2020weights}, enabling adaptive control of weight distribution and archive maintenance during optimisation.

\vspace{2.0mm}

A multi-objective genetic algorithm for flowshop scheduling was presented in \cite{murata1996multi}, incorporating variable weighting and elite preservation to improve diversity and capture concave Pareto regions. The clustering-ranking evolutionary algorithm (crEA) developed in \cite{cai2015clustering} showed effective performance across many-objective benchmark problems by enhancing both convergence and diversity. An extension of the expected hypervolume improvement (EHVI) criterion was proposed in \cite{feliot2019user}, using weighted preferences and sequential Monte Carlo (SMC) techniques to guide Bayesian optimisation in expensive black-box settings.

\vspace{2.0mm}

The W-HypE algorithm introduced in \cite{brockhoff2013directed} used weighted hypervolume indicators and Monte Carlo sampling to steer optimisation towards user-preferred regions in high-dimensional objective spaces. A comparative study in \cite{hughes2005evolutionary} evaluated NSGA-II, Multiple Single Objective Pareto Sampling (MSOPS), and repeated single objective optimisations (RSO), concluding that MSOPS and RSO outperform NSGA-II in many-objective settings due to their avoidance of Pareto dominance ranking. A preference-based selection method using the minimum Manhattan distance, avoiding subjective weights, was proposed in \cite{chiu2016minimum} as a computationally efficient approach for MCDM problems.

\vspace{2.0mm}

A comprehensive survey of MOPSO applications across various fields was conducted in \cite{lalwani2013comprehensive}, highlighting the algorithm’s flexibility, variants, and practical advantages for complex multi-objective problems. The Dynamical Multi-Objective Evolutionary Algorithm (DMOEA) and the L-optimality concept introduced in \cite{zou2008new} showed improved solution diversity and convergence, especially in many-objective contexts. A two-stage evolutionary algorithm (MaOEA-IT) was proposed in \cite{sun2018new}, independently addressing convergence and diversity, and demonstrating performance improvements over six benchmarked MaOEAs.

\vspace{2.0mm}

A multi-objective bat algorithm (MOBA), developed in \cite{yang2011bat}, incorporated adaptive parameter tuning and weighted sum strategies to address constrained optimisation problems, achieving competitive results on benchmark and real-world design tasks. Enhanced algorithms for sum estimation under both proportional and hybrid sampling settings were presented in \cite{beretta2024better}, improving upon earlier methods such as those in \cite{motwani2007estimating}. These methods introduced tighter complexity bounds—$\mathcal{O}(\sqrt{n}/\epsilon)$ for proportional settings and $\mathcal{O}(n^{1/3}/\epsilon^{4/3})$ for hybrid settings—along with strategies to handle unknown universe sizes. A weight-agnostic constrained sampling technique called WAPS, introduced in \cite{10.1007/978-3-030-17462-0_4}, utilised d-DNNF compilation to achieve significant runtime and scalability improvements over prior approaches such as WeightGen.

\vspace{2.0mm}

An analysis of Markov Chain Monte Carlo (MCMC) sampling for the Winnow multiplicative weight update algorithm was presented in \cite{tao2003analysis}, demonstrating how computational complexity can be reduced while maintaining accuracy, with techniques such as parallel tempering improving sampling efficiency. Scalable parallel algorithms for weighted random sampling were proposed in \cite{hubschle2022parallel}, achieving near-linear speedups across shared- and distributed-memory architectures through communication-efficient data structures. A multi-objective Artificial Bee Colony (ABC) algorithm was applied in \cite{naidu2014multiobjective} to tune PID controllers for load frequency control, outperforming conventional approaches based on key performance indices.

\vspace{2.0mm}

A hybrid optimisation algorithm combining genetic algorithms, grey wolf optimiser, water cycle algorithm, and population-based incremental learning using a weighted sum approach (E-GGWP-W) was developed in \cite{wansasueb2021ensemble} and applied to composite wing design, yielding superior performance on benchmark problems. An MCDM method integrating the Weighted Sum approach with the Step-Wise Weight Assessment Ratio Analysis (SWARA) method was proposed in \cite{Stanujkic2017new}, enabling flexible, consensus-driven decision-making in personnel selection tasks. A weighted sum model for wind turbine selection was employed in \cite{Rehman2017multi}, facilitating the identification of optimal alternatives across 18 commercial turbine options based on five key technical criteria.

\vspace{2.0mm}

A hybrid decision-making approach known as the Weighted Aggregates Sum Product Assessment (WASPAS) method was introduced in \cite{Zavadskas2012optimization}, combining WSM and the Weighted Product Model (WPM) to enhance ranking precision in MCDM problems. The Weighted Sum Preferred Levels of Performances (WS PLP) method presented in \cite{Karabasevic2018weighted} integrated SWARA weighting with preferred performance levels, offering a practical approach for personnel selection in human resource management. A hybrid pathfinding algorithm called Weighted Sum-Dijkstra’s Algorithm (WSDA), combining WSM and Dijkstra’s algorithm, was proposed in \cite{Hua2018weighted}, enabling efficient multi-criteria path selection based on normalised and weighted attributes such as cost, distance, and travel time.

\vspace{2.0mm}

The study in \cite{sianturi2019implementation} applied the WSM framework to develop a decision support system for selecting football athletes, using weighted evaluation criteria to generate data-driven rankings. In \cite{Harahap2023multi}, the Analytic Hierarchy Process (AHP) was used to derive weights for evaluating university housing options, demonstrating how WSM can be integrated with student preference data to support structured housing decisions. An improved WSM for evaluating weapon systems was introduced in \cite{zhi2013weighted}, which combined subjective expert judgment with objective weighting via grey theory to enhance accuracy and reduce bias in the evaluation process.

\vspace{2.0mm}

A low-complexity algorithm for weighted sum-rate maximisation in MIMO broadcast channels was proposed in \cite{christensen2008weighted}, leveraging alternating optimisation of transmit and receive filters to ensure fast convergence and strong system performance. The application of WSM to medical data for ranking breast carcinoma types was demonstrated in \cite{yong2022analysis}, where ten clinical criteria were used to support objective, preference-based diagnosis. In \cite{fisal2022adaptive}, an adaptive weighted sum bi-objective Bat algorithm (AW-ABBA) was developed for regression testing, tackling the test suite reduction problem by optimising execution time and fault detection simultaneously, and outperforming traditional approaches.

\vspace{2.0mm}

Mukhametzyanov \cite{mukhametzyanov2021specific} compared objective weighting methods in MCDM and proposed improved entropy-based approaches (EWM.df, EWM.dsp, and EWM-Corr) to address limitations in weight assignment and correlated criteria. Keshavarz-Ghorabaee et al. \cite{keshavarz2021determination} introduced MEREC, a novel weighting method that evaluates the impact of removing criteria, showing strong performance and consistency across various matrices. Brauers et al. \cite{Brauers2008multi} applied the MOORA (Multi-Objective optimisation on the basis of Ratio Analysis) method to optimise road construction design, ranking alternatives based on multiple criteria in a real-world case study.

\vspace{2.0mm}

A comparative analysis of objective weighting methods was conducted in \cite{mukhametzyanov2021specific}, which identified limitations in conventional entropy-based models and introduced improved versions (EWM.df, EWM.dsp, and EWM-Corr) to better manage correlated criteria in MCDM settings. The MEREC method, introduced in \cite{keshavarz2021determination}, calculated criterion importance by measuring the performance change caused by criterion removal, offering consistent results across diverse decision matrices. A real-world road construction case study in \cite{Brauers2008multi} demonstrated the application of the MOORA method, which ranked highway design alternatives using a ratio-based multi-objective evaluation approach.

\vspace{2.0mm}

An improved version of the M-PF optimiser, known as the elite Multi-Criteria Decision Making–Pareto Front (eMPF) optimiser, was introduced in \cite{Kesireddy2024elite}. This method integrates multi-objective optimisation with decision-making techniques to efficiently explore and refine Pareto-optimal solutions. A reinforcement learning-based optimisation framework for power generation scheduling was proposed in \cite{Ebrie2024reinforcement}, utilising a multi-agent deep reinforcement learning (MADRL) model to decompose the problem into sequential Markov decision processes. In \cite{Meghwani2020adaptively}, an adaptively weighted decomposition-based evolutionary algorithm (AWMOEA/D) was developed to improve convergence and diversity by modifying scalarisation weights based on crowding distance metrics.

\vspace{2.0mm}

A comprehensive review of the NSGA-II algorithm and its application to multi-objective combinatorial optimisation problems (MOCOPs) was conducted in \cite{verma2021comprehensive}. The review classified research into conventional, modified, and hybrid NSGA-II implementations, and compared their performance with alternative algorithms across a range of problems including assignment, vehicle routing, and knapsack. A fuzzy SWARA-CoCoSo model was proposed in \cite{ulutacs2020location} for selecting optimal logistics centre locations, integrating geographic information systems (GIS) and validating results against other MCDM methods. In \cite{guo2020weighted}, a low-complexity optimisation strategy was developed for reconfigurable intelligent surface (RIS)-assisted multiuser MISO downlink communication systems, maximising the weighted sum-rate (WSR) under varying channel state information conditions.

\vspace{2.0mm}

A hybrid approach combining fuzzy MCDM methods with multi-objective programming was introduced in \cite{tirkolaee2020novel} for sustainable supplier selection. This method integrated Fuzzy ANP, DEMATEL, and TOPSIS, and was embedded in a tri-objective mixed-integer linear programming model, which was validated through a real-world supply chain case study. In \cite{zarepisheh2014generating}, a nonlinear scalarisation method was proposed to generate properly efficient points in multi-objective optimisation problems, with theoretical conditions provided for guaranteeing solution optimality. An adaptive weighted sum test using LASSO regression was developed in \cite{liu2020adaptive} for multi-locus family-based genetic association studies, enhancing detection of both common and rare variants while maintaining statistical rigour.

\vspace{2.0mm}

A Weighted Sum Validity Function optimised using a Hybrid Niching Genetic Algorithm was proposed in \cite{sheng2005weighted} to improve clustering performance by preserving population diversity and incorporating $k$-means hybridisation. A revision to the weighted sum model for robot selection was introduced in \cite{goh1996revised}, where extreme expert inputs were excluded to reduce the effects of outliers, thereby improving decision stability and preventing rank reversal. The analysis and synthesis of weighted-sum (WS) functions in \cite{sasao2006analysis} led to a practical design method using look-up table (LUT) cascades, with applications in digital systems such as bit counting and radix conversion.

\vspace{2.0mm}

The study in \cite{rowley2012aggregating} reviewed multi-criteria decision analysis (MCDA) methods for aggregating sustainability indicators, emphasising the importance of context-specific method selection and the limitations of basic weighted sums. Parameter tuning strategies for weighted ensemble sampling of Markov chains were introduced in \cite{aristoff2020optimizing}, enhancing computational efficiency in steady-state simulation, particularly for rare event estimation. A weighted-sum-based solution for the bi-objective travelling thief problem (BITTP) was proposed in \cite{chagas2022weighted}, where the method outperformed competitors and established new best-known solutions on several benchmark instances.

\vspace{2.0mm}

A portfolio optimisation method tailored for electricity markets was developed in \cite{faia2018genetic} using a genetic algorithm combined with weighted sum scalarisation, producing efficient profit-risk trade-offs with significantly reduced runtime. In \cite{yuan2014efficient}, a reliability-based optimisation (RBO) framework was proposed, which employed weighted importance sampling to separate the reliability and optimisation processes, reducing the need for repeated reliability analyses. An adaptive weighted sum strategy was implemented in \cite{huang2008seeking} for spatial multi-objective optimisation problems, addressing the issue of poor Pareto front coverage in concave regions by iteratively adjusting search direction and refining solution diversity.

\vspace{2.0mm}

A benchmarking study conducted in \cite{Wagner2006} evaluated the performance of several evolutionary multi-objective optimisation algorithms (EMOAs) in many-objective settings. The study assessed traditional Pareto-based methods such as NSGA-II and SPEA2, alongside newer algorithms like $\varepsilon$-MOEA, IBEA, and SMS-EMOA, across test problems with three to six objectives. Results showed that while traditional methods suffer from diminished performance in high-dimensional spaces, newer aggregation- and indicator-based algorithms, such as SMS-EMOA, performed significantly better. A preference-based approach inspired by material science, the Weighted Stress Function Method (WSFM), was introduced in \cite{ferreira2017methodology}, demonstrating improved alignment with decision-maker preferences. A comprehensive survey of many-objective evolutionary algorithms (MaOEAs) was provided in \cite{li2015many}, classifying them into seven key strategic classes and analysing scalability, strengths, and limitations across different problem domains.

\vspace{2.0mm}

A non-Pareto evolutionary optimisation approach called Multiple Single Objective Pareto Sampling (MSOPS) was proposed in \cite{hughes2003multiple}, using weighted min-max strategies to explore high-dimensional Pareto sets, particularly in the presence of complex surfaces. In \cite{ahrari2021weighted}, a weighted pointwise prediction method (WPPM) was developed to address dynamic multi-objective optimisation problems, integrating multi-model forecasts and directional variation strategies to maintain solution diversity and robustness. A weighted maximum (WM) scalarisation approach for multi-objective robot planning was introduced in \cite{Wilde2024}, showing improved coverage of non-convex trade-offs and outperforming traditional weighted sum approaches in diverse motion planning scenarios.

\vspace{2.0mm}

A method for black-box simulation-based multi-objective optimisation using an adaptive weighting scheme was proposed in \cite{deshpande2016multiobjective}, combining the DIRECT algorithm for global search and MADS for local refinement, and dynamically adjusting weights to better approximate the Pareto front. In \cite{chen2023weights}, an empirical comparison of Pareto and weighted search strategies in Search-Based Software Engineering (SBSE) showed that Pareto search consistently outperformed weighted search under sufficient computational budgets, even in the presence of stakeholder preferences. A grid-based local search method called Grid Weighted Sum Pareto Local Search (GWS-PLS) was introduced in \cite{cai2018grid}, combining Pareto dominance and weighted sum strategies to improve computational efficiency, scalability, and diversity in combinatorial optimisation.

\vspace{2.0mm}

A weighted sampling framework for estimating multiple segment-level statistics in large datasets was developed in \cite{cohen2015multi}, enabling efficient computation of f-statistics with statistically guaranteed accuracy from reduced sample sizes. A multi-objective optimisation method based on the Seagull Optimisation Algorithm (MOSOA) was introduced in \cite{dhiman2021mosoa}, which used dynamic archive caching and roulette wheel selection to enhance exploration and exploitation during the search. In \cite{Kashfi2011}, optimisation techniques for VLSI circuit design were presented, addressing conflicting objectives of power dissipation and delay using three scalarisation methods: Weighted Sum (WS), Compromise Programming (CP), and the Satisficing Trade-off Method (STOM), within both convex and non-convex modelling frameworks.

\vspace{2.0mm}

Multi-criteria Polynomial Time Approximation Schemes (PTAS) were developed in \cite{grandoni2014new} for classic $\mathcal{NP}$-hard problems such as spanning tree and bipartite matching under multiple budget constraints. The schemes employed iterative rounding to achieve near-optimal solutions while permitting slight budget violations. An adaptive weighted-sum and clustering-based topology optimisation method was proposed in \cite{ryu2021multi}, producing diverse and well-spaced Pareto-optimal solutions efficiently. The theoretical relationship between decomposition-based scalarisation methods and Pareto-based approaches was analysed in \cite{giagkiozis2015methods}, where Chebyshev scalarisation was shown to yield similar performance under certain assumptions, particularly when search trajectory balance was maintained.

\vspace{2.0mm}

An intelligent sampling approach guided by adaptive weighted-sum methods was proposed in \cite{lin2018intelligent} to solve black-box multi-objective optimisation problems more efficiently, achieving well-distributed Pareto fronts while minimising simulation effort. A weighted sum method incorporating partial preference information was developed in \cite{kaddani2017weighted}, allowing for flexible weight specification and reducing the need for precise preference elicitation in MCDM problems. In \cite{gharehkhani2014extension}, the weighted-sum-of-gray-gases (WSGG) model was extended for simulating radiative heat transfer in gas-soot mixtures, accurately predicting furnace performance across different load conditions.

\vspace{2.0mm}

The Weighted Sum of Segmented Correlation (WSSC) method was introduced in \cite{soor2024weighted} for hyperspectral material identification, using weighted segment-wise correlation indices to enhance the detection of subtle spectral absorption features. This approach demonstrated improved performance over traditional full-spectrum similarity measures. An adaptive weighted sum strategy for simulation-based multi-objective optimisation was proposed in \cite{ryu2009pareto}, termed the Pareto front Approximation with Adaptive Weighted Sum (PAWS), which showed improved convergence and distribution of Pareto-optimal solutions compared to BIMADS. The Integrated Simple Weighted Sum Product (WISP) method, introduced in \cite{stanujkic2021integrated}, combined weighted sum and weighted product techniques to offer a simplified, accessible MCDM tool, validated through comparisons with methods such as TOPSIS and VIKOR.

\vspace{2.0mm}

The Localized Weighted Sum (LWS) method was proposed in \cite{wang2016localized} to enhance many-objective optimisation by combining the efficiency of weighted sum scalarisation with localised search strategies. Operating within hypercones around weight vectors, the method selectively evaluates neighbouring solutions to better manage non-convexity in the objective space. A critical analysis of the Weighted Sum Method (WSM) was provided in \cite{marler2010weighted}, highlighting its conceptual strengths and limitations, especially with respect to preference articulation and performance on non-convex Pareto fronts. The study also offered practical guidelines for selecting weights to mitigate distributional deficiencies in solution quality.

\vspace{2.0mm}

Overall, the literature highlights the versatility and adaptability of WSM in tackling multi-criteria decision problems across various domains, offering insights into its integration with novel optimisation methods, technological advancements, and multi-objective problem-solving.


\section{Weighting the Weighted Sum Method}
Having reviewed various applications associated with the WSM in MCDM, we now turn our attention to the critical task of selecting the weights. Despite its importance, this step is often overlooked or treated superficially in existing literature. We introduce several structured approaches aimed at reducing redundancy, improving computational efficiency, and enabling a comprehensive exploration of nondominated solutions.

\vspace{2.0mm}

Recall that the weighted sum method converts the original problem to \eqref{WSM_Definition}. We focus initially on the bi-objective setting (i.e. $p=2$), before extending th approaches to the case with $p \ge 3$ objectives. 
One natural question explored within this section is how to vary these weights in order to find many (distinct) weakly efficient solutions while additionally avoiding redundancy. It should be noted that such redundancy could occur given that multiple choices of weights could lead to the same solution. Several approaches for varying the weights will be outlined below.

\subsection{\texorpdfstring{Weight Selection for $p=2$ Objectives}{Weight Selection for p = 2 Objectives}}
Note that the bi-objective setting yields 
\begin{equation*}
\min _{\boldsymbol{x} \in \mathcal{X}} \, \sum_{i=1}^2 \lambda_i \, \boldsymbol{c}_i^T\boldsymbol{x} = \lambda_1 \boldsymbol{c}_1^T \boldsymbol{x} + \lambda_2 \boldsymbol{c}_2^T \boldsymbol{x},
\end{equation*}
where $\lambda_1+\lambda_2=1$, $\lambda_1, \lambda_2 \ge 0$ and $\mathcal{X}$ denotes the feasible set of solutions. The method successively varies the weights $\lambda_1$ and $\lambda_2$ in order to find weakly efficient solutions. 

\vspace{2.0mm}

The first approach, which we call the \textit{uniform increment approach}, divides the range $[0,1]$ into $d$ equal subintervals for each weight. Note that each subinterval clearly has length $1/d$. In the bi-objective case, we simply vary the weight $\lambda_1$ from 0 to 1 in increments of $1/d$, while setting $\lambda_2 = 1 - \lambda_1$. The parameter $d$ can be intuitively thought of as the \enquote{depth} of search, where a larger $d$ provides a finer resolution, allowing for more precise sampling at the cost of increased computational effort (and likely greater redundancy). Observe that in this approach we solve $d+1$ problems, namely with weights 
$$
(\lambda_1, \lambda_2) = \big\{ (0,1), (1/d, 1-1/d), \ldots, (1,0) \big\}.
$$ 

\vspace{2.0mm}

The second approach, which we call the \textit{random sampling approach}, instead randomly samples weights from the feasible range, while ensuring that they sum to 1. Since $p = 2$ by assumption, it is sufficient to sample only $\lambda_1$ and set $\lambda_2 = 1 - \lambda_1$. The sampling for $\lambda_1$ could be done from distributions such as the uniform distribution or the beta distribution, allowing for flexibility in controlling the spread and concentration of weights over the feasible range. It should be noted that this method is less systematic, however, it could be useful for higher-dimensional problems. An important observation is that it is unclear when the sampling procedure should terminate to stop searching for nondominated points.

\vspace{2.0mm}

The third approach, which we call the \textit{Latin hypercube sampling (LHS) approach}, informally divides $[0,1]$ into $d$ equal subintervals for each weight. Then if $d$ is even, we randomly sample one value from each interval for each weight, shuffle those sampled values to create multiple combinations, before normalising each combination so that the sum of the (combined) weights equals 1. If instead $d$ is odd, one value is randomly sampled from each interval other than the $\lceil d / 2 \rceil$-th interval in the ordered sequence, from which we sample two values for technical reasons. 

\vspace{2.0mm}

Note that each shuffle creates an ordered sequence of the $d$ (or $d+1$) randomly sampled weights. This process is repeated $s$ times, where $s$ denotes the number of (overall) shuffles, yielding an ordered sequence of $sd$ (or $s(d+1)$) weights. Adjacent weights in the sequence are then paired to create $sd/2$ (or $s(d+1)/2$) combinations. Each combination is subsequently normalised such that the sum of the (combined) weights equals 1.

\vspace{2.0mm}
 
This approach interestingly leads to the normalised weights being roughly normally distributed, with most values concentrated around the mean of 0.5, rather than near 0 or 1. This is since the process draws from equally spaced intervals across $[0,1]$, with fewer values coming from the extreme ends near 0 and 1. For instance, to generate a combination where $\lambda_1$ is near 0, we would need one sample from an interval close to 0 and another from an interval close to 1 in the same combination, a statistically less likely event. Thus, the weights are much more likely to be close to 0.5 rather than the extremes. This weight concentration may therefore not result in a good spread of solutions in an optimisation context. 

\vspace{2.0mm}

\begin{figure}[ht!] 
\centering
    \begin{subfigure}{0.48\linewidth}
        \includegraphics[width=\linewidth]{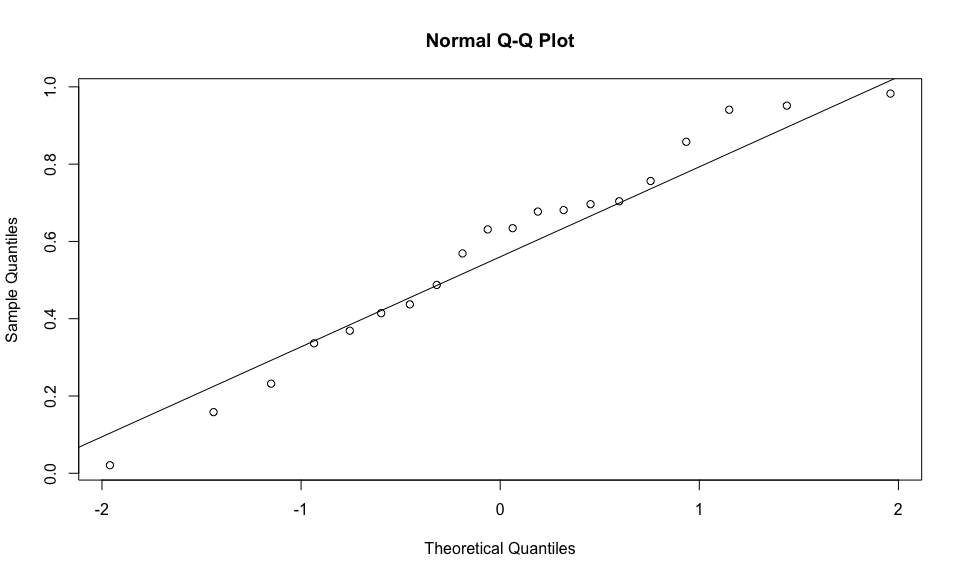}
        \caption{Normal Q-Q plot with $d=20$ and $s=2$.}
        \label{figsub:1}
    \end{subfigure}
    \hfill
    \begin{subfigure}{0.48\linewidth}
        \includegraphics[width=\linewidth]{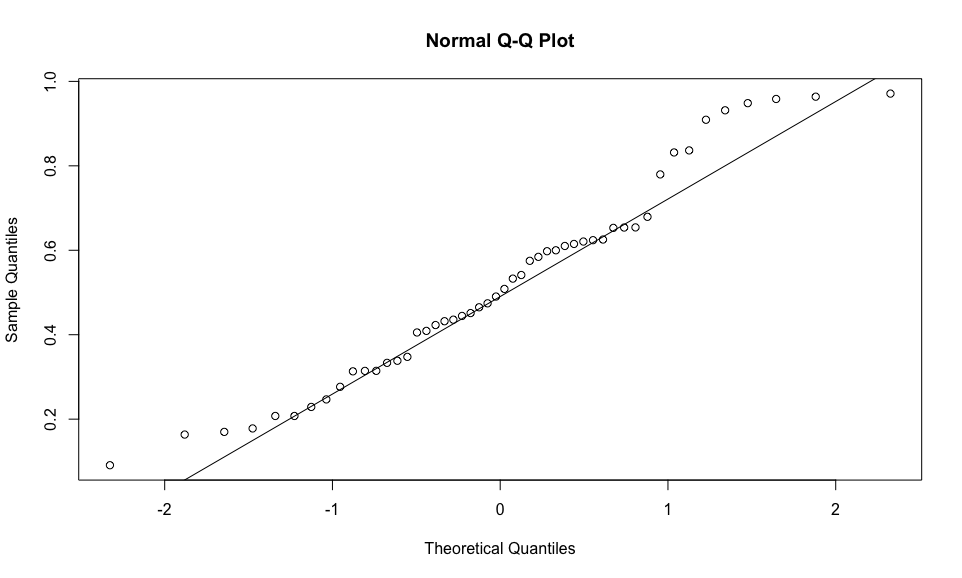}
        \caption{Normal Q-Q plot with $d=20$ and $s=5$.}
        \label{figsub:2}
    \end{subfigure}
\hfill
    \begin{subfigure}{0.48\linewidth}
        \includegraphics[width=\linewidth]{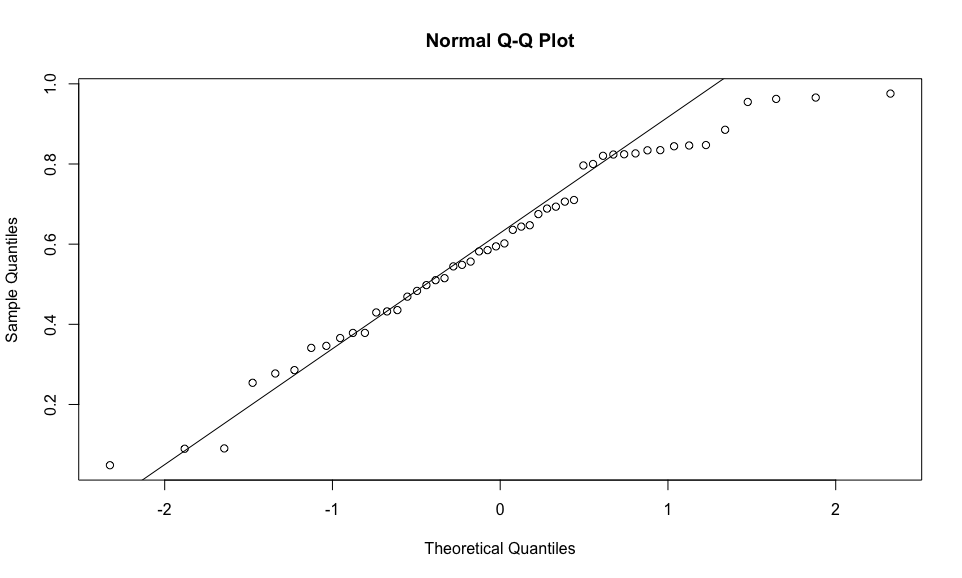}
        \caption{Normal Q-Q plot with $d=50$ and $s=2$.}
        \label{figsub:3}
    \end{subfigure}
    \hfill
    \begin{subfigure}{0.48\linewidth}
        \includegraphics[width=\linewidth]{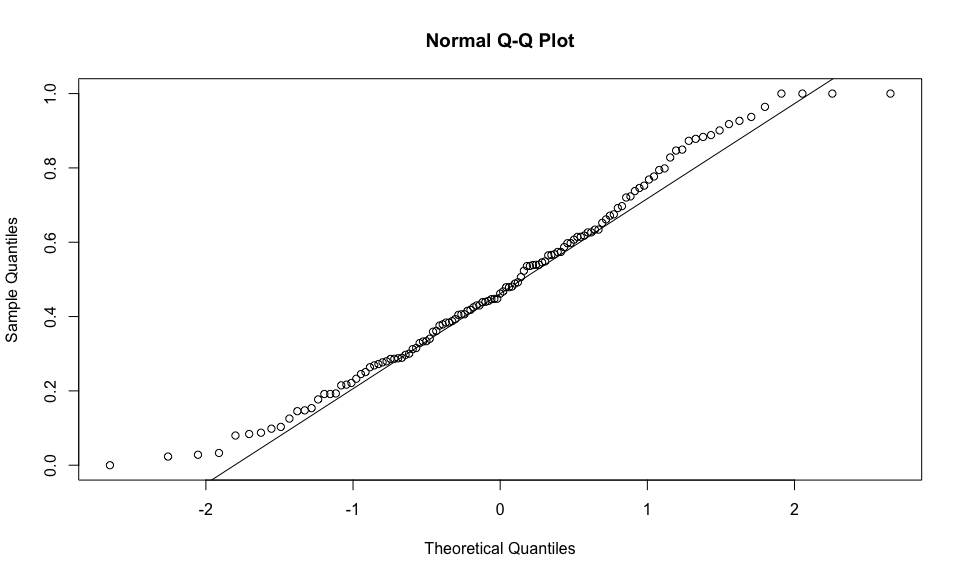}
        \caption{Normal Q-Q plot with $d=50$ and $s=5$.}
        \label{figsub:4}
    \end{subfigure}
    
    \caption{This figure presents four normal Q-Q plots for different (smaller) values of $d$ (number of intervals) and $s$ (number of shuffles) in the Latin hypercube sampling (LHS) approach.}
    \label{fig:1234}
\end{figure}

This claim is further supported by presenting several normal quantile-quantile (Q-Q) plots (namely Figures \ref{fig:1234} and \ref{fig:1234_larger}) of normalised weights drawn from $d$ intervals and shuffled $s$ times, for selected even values of $d$ and selected $s$. Recall that the process yields $sd/2$ combinations. Then, from each combination, we randomly select one value for plotting, resulting in $sd/2$ points on each plot. Note that in each normal Q-Q plot, for each point $(x,y)$, $x$ corresponds to one of the $sd/2$ quantiles from a normal distribution, and $y$ corresponds to one of the $sd/2$ weights. When the points lie close to the line $y = x$, this indicates that the values are approximately normally distributed.

\vspace{2.0mm}

Furthermore, at the tails of the normal Q-Q plots (Figures \ref{fig:1234} and \ref{fig:1234_larger}), we observe deviations from the line. In particular, smaller $x$-values (representing the lower tail) tend to lie above the line, indicating that the sampled values are larger than expected, suggesting a lighter left tail. Conversely, larger $x$-values (representing the upper tail) tend to lie below the line, indicating that the sampled values are smaller than expected, implying a lighter right tail. This suggests that the distribution of weights has thinner tails compared to a normal distribution.

\vspace{2.0mm}

The concentration of weights in the LHS approach is problematic, as we would ideally like our weights to be more evenly distributed in order to explore a broader range of potential nondominated points. This motivates us to introduce additional structure to the LHS approach, leading to the development of a more refined approach. 

\vspace{2.0mm}

The fourth approach, which we call the \textit{structured Latin hypercube sampling (SLHS) approach}, works similarly to the LHS approach, however, before sampling the $d$ intervals are structured. In particular, firstly the approach once more divides $[0,1]$ into $d$ equal subintervals. Then assuming $d$ is even, we pair all intervals $[a_1, a_2]$ and $[b_1, b_2]$ such that 
\begin{equation} \label{SLHS_property}
a_1 + b_2 = a_2 + b_1 = 1.
\end{equation}
Observe that given $d$ is by assumption even, we will clearly have $d/2$ pairs of intervals satisfying this property. Then we randomly sample one value from each interval for each weight. Next, we form $d/2$ pairs using the sampled values from the matched intervals. Finally, we normalise each combination such that the sum of the weights equals 1. The following example illustrates the SLHS approach with a small number of intervals. 

\vspace{2.0mm}

\begin{example}
Suppose that $p=2$ and $d=4$. We firstly divide the range $[0,1]$ into the intervals
$$
\left[0, \frac{1}{4}\right], \left[\frac{1}{4}, \frac{1}{2}\right], \left[\frac{1}{2}, \frac{3}{4}\right], \left[\frac{3}{4}, 1\right]
$$
Next, we pair intervals satisfying \eqref{SLHS_property}, which yields
\begin{itemize}
    \item Pair 1:$[0, \, 1/4]$ and $[3/4, \, 1]$, and
    \item Pair 2: $[1/4, \, 1/2]$ and $[1/2, \, 3/4]$.
\end{itemize}
We then randomly sample one value from each interval, for example
\begin{itemize}
    \item Pair 1: Sample 0.06637717 from $[0, \, 1/4]$ and 0.843031 from $[3/4, \, 1]$, and
    \item Pair 2: Sample from 0.3932133 $[1/4, \, 1/2]$ and 0.7270519 from $[1/2, \, 3/4]$.
\end{itemize}
Finally, we normalise each pair such that their sum equals 1, which (upon rounding to three decimal places) gives
\begin{itemize}
    \item Pair 1: Normalised weights are 
    $$
    \frac{0.06637717}{0.06637717+0.843031} = 0.073
    $$
    and 
    $$
    \frac{0.843031}{0.06637717+0.843031} = 0.927, \text{ and}
    $$
    \item Pair 2: Normalised weights are 
    $$
    \frac{0.3932133}{0.3932133+0.7270519} = 0.3510
    $$
    and 
    $$
    \frac{0.7270519}{0.3932133+0.7270519} = 0.6490.
    $$
\end{itemize}
Thus, two structured, distinct weight pairs are generated, namely 
$$
(0.073, \, 0.927) \quad \text{and} \quad
(0.3510, \, 0.6490).
$$
\end{example}

\vspace{2.0mm}

This approach can be thought of as the structured analogue to the LHS approach, given that it is guaranteed that the sampled pairs after normalisation must remain within their corresponding initial (paired) intervals, which is formally proven below (Lemma \ref{Lemma_post_normalise}). Note that if instead $d$ is odd, the $\lceil d/2 \rceil$-th interval lacks a natural partner under the pairing scheme. To accommodate this, we simply sample two values from this interval and proceed with normalisation similarly, thereby preserving the pairing structure and theoretical guarantees of the method.


\begin{lemma} \label{Lemma_post_normalise}
Suppose $[a_1, a_2]$ and $[b_1, b_2]$ are (ordered) subintervals of $[0,1]$ with 
$$
a_1 < a_2 < b_1 < b_2
$$ 
satisfying 
$$a_1 + b_2 = a_2 + b_1 = 1.$$ 
Then any $a \in [a_1, a_2]$ and $b \in [b_1, b_2]$ satisfy 
\begin{equation} \label{bounding_lemma}
\frac{a}{a+b} \in [a_1, a_2] \quad \text{ and } \quad \frac{b}{a+b} \in [b_1, b_2].
\end{equation}
\end{lemma}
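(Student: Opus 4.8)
The plan is to exploit the symmetry hidden in the hypothesis. From $a_1 + b_2 = 1$ and $a_2 + b_1 = 1$ I would first rewrite the second interval as the reflection of the first about $1/2$, namely $b_1 = 1 - a_2$ and $b_2 = 1 - a_1$, so that $[b_1, b_2] = [1-a_2,\, 1-a_1]$. This immediately shows the two claims in \eqref{bounding_lemma} are not independent: since $\tfrac{a}{a+b} + \tfrac{b}{a+b} = 1$, once I establish $\tfrac{a}{a+b} \in [a_1, a_2]$ the reflection gives $\tfrac{b}{a+b} = 1 - \tfrac{a}{a+b} \in [1-a_2,\, 1-a_1] = [b_1, b_2]$ for free. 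Hence it suffices to prove the single containment $a_1 \le \tfrac{a}{a+b} \le a_2$.

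Before doing so I would check the fractions are well-defined: from $a_1 < a_2 < b_1 < b_2 \le 1$ we have $a_2 < 1$, hence $b \ge b_1 = 1 - a_2 > 0$, so $a + b > 0$ and both quotients make sense.

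Then I would prove the two bounds separately by clearing the (positive) denominator. For the lower bound, $\tfrac{a}{a+b} \ge a_1$ is equivalent to $a(1 - a_1) \ge a_1 b$, and substituting $1 - a_1 = b_2$ reduces this to $a\, b_2 \ge a_1 b$; this follows from the monotonicity facts $a \ge a_1$ and $b_2 \ge b$ together with nonnegativity, via $a b_2 \ge a_1 b_2 \ge a_1 b$. Symmetrically, the upper bound $\tfrac{a}{a+b} \le a_2$ is equivalent to $a(1 - a_2) \le a_2 b$, i.e.\ $a\, b_1 \le a_2 b$ after substituting $1 - a_2 = b_1$, which follows from $a \le a_2$ and $b_1 \le b$ via $a b_1 \le a_2 b_1 \le a_2 b$.

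I do not expect a deep difficulty here: the lemma is essentially a one-line consequence once the reflection $b_i = 1 - a_{3-i}$ is spotted, which converts the boundary condition \eqref{SLHS_property} into the statement that $[a_1,a_2]$ and $[b_1,b_2]$ are mirror images across $1/2$. The only points demanding genuine care are (i) confirming $a+b>0$ so that the quotients and the inequality manipulations are legitimate, and (ii) keeping the inequality directions straight when multiplying through, in particular pairing each bound with the correct substitution ($1-a_1 = b_2$ for the lower bound, $1-a_2 = b_1$ for the upper) so that the surviving inequalities are precisely the interval-membership facts $a_1 \le a \le a_2$ and $b_1 \le b \le b_2$ combined with nonnegativity.
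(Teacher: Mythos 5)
Your proof is correct, but it takes a genuinely different route from the paper's. The paper argues by contradiction: it assumes one of the four possible violations ($a/(a+b) < a_1$, $a/(a+b) > a_2$, $b/(a+b) < b_1$, or $b/(a+b) > b_2$) and refutes each case in turn by dividing through by $a_1$, $a_2$, $b_1$ or $b_2$ respectively, which forces a separate sub-argument for the degenerate case $a_1 = 0$. You instead give a direct proof and, crucially, observe first that the pairing condition makes $[b_1, b_2] = [1-a_2,\, 1-a_1]$ the mirror image of $[a_1, a_2]$ about $1/2$, so the second containment follows from the first via $\frac{b}{a+b} = 1 - \frac{a}{a+b}$; this halves the work from four cases to two inequalities. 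Your remaining bounds are then settled by clearing the positive denominator and chaining products ($a\,b_2 \ge a_1 b_2 \ge a_1 b$ for the lower bound, $a\,b_1 \le a_2 b_1 \le a_2 b$ for the upper), which avoids division altogether and hence needs no special handling of $a_1 = 0$. What the paper's case analysis buys is that each potential violation is confronted with an explicit, self-contained contradiction, which is easy to audit line by line; what yours buys is brevity, exploitation of the symmetry that motivates the pairing \eqref{SLHS_property} in the first place, and freedom from degenerate-case bookkeeping. Your check that the quotients are well-defined ($a+b > 0$ because $b \ge b_1 = 1 - a_2 > 0$) is a point the paper's proof passes over silently, so your write-up is in that respect slightly more complete.
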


\begin{proof}
Suppose for contradiction that \eqref{bounding_lemma} does not hold, i.e. 
$$
\frac{a}{a+b} \notin [a_1, a_2]
$$ 
or 
$$
\frac{b}{a+b} \notin [b_1, b_2].$$ 
Note that this is the case when $a/(a+b) < a_1$, $a/(a+b) > a_2$, $b/(a+b) < b_1$ or $b/(a+b) > b_2$ hold. Each such case will be considered in turn.

\vspace{2.0mm}

Firstly, suppose that $a/(a+b) < a_1$ holds. This yields 
$$
a(1-a_1) < a_1 b
$$ 
through algebraic manipulation. Observe that if $a_1 = 0$, then we deduce that $a < 0$, which is clearly a contradiction. If instead $a_1 \ne 0$, then upon dividing by $a_1$ we have 
$$\frac{a}{a_1} \, (1-a_1) < b.$$ 
Note that $a \ge a_1$ by assumption and hence ${a}/{a_1} \ge 1$. In particular, this implies that $b > 1-a_1$ holds, which is a contradiction given $b \le b_2$ and $b_2 = 1 -a_1$.

\vspace{2.0mm}

Secondly, suppose that $a/(a+b) > a_2$ holds. This yields 
$$
a(1-a_2) > a_2 b.
$$ 
Observe that the assumed ordering $0 \le a_1 < a_2 < b_1 < b_2 \le 1$ implies that $a_2 \ne 0$. Upon dividing by $a_2$ we have 
$$
\frac{a}{a_2} \, (1-a_2) > b.
$$ 
Note that $a \le a_2$ by assumption and hence $a/a_2 \le 1$. It follows that $1-a_2 > b$ holds, which is a contradiction given that $b \ge b_1$ and $b_1 = 1 -a_2$. 

\vspace{2.0mm}

Thirdly, suppose that $b/(a+b) < b_1$ holds. This yields 
$$
b(1-b_1) < b_1 a
$$ 
via manipulation. Note that the assumed ordering implies that $b_1 \ne 0$. Upon dividing by $b_1$ we yield 
$$
b/b_1 \, (1-b_1) < a.
$$ 
Note that $b \ge b_1$ by assumption and hence $b/b_1 \ge 1$. This implies that $1-b_1 < a$ holds, which is a contradiction given that $a \le a_2$ and $a_2 = 1 - b_1$.

\vspace{2.0mm}

Finally, suppose that $b/(a+b) > b_2$ holds. This yields 
$$
b(1-b_2) > b_2 a.
$$ 
Note that $b_2 \ne 0$ and then upon dividing by $b_2$ we deduce that 
$$
\frac{b}{b_2} \, (1-b_2) > a.
$$ 
Note that $b \le b_2$ and hence $b / b_2 \le 1$. This implies that $1-b_2 > a$ holds, which is a contradiction given that $a \ge a_1$ and $a_1 = 1 - b_2$. In particular, the four cases considered demonstrate that \eqref{bounding_lemma} holds, which completes the proof as required. 
\end{proof}

\vspace{2.0mm}

\begin{figure}[ht!] 
\centering
    \begin{subfigure}{0.49\linewidth}
        \includegraphics[width=\linewidth]{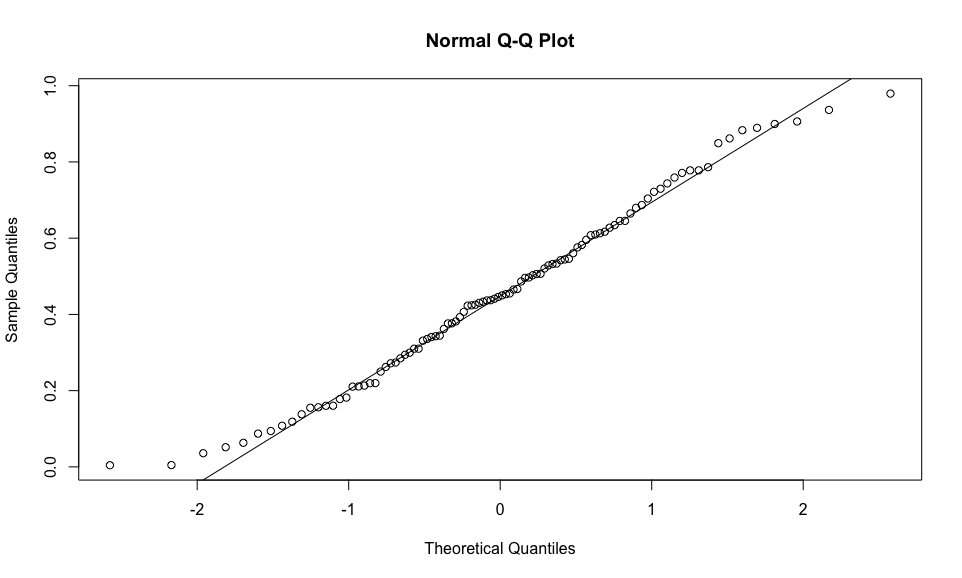}
        \caption{Normal Q-Q plot with $d=100$ and $s=2$.}
        \label{figsub:11}
    \end{subfigure}
    \hfill
    \begin{subfigure}{0.49\linewidth}
        \includegraphics[width=\linewidth]{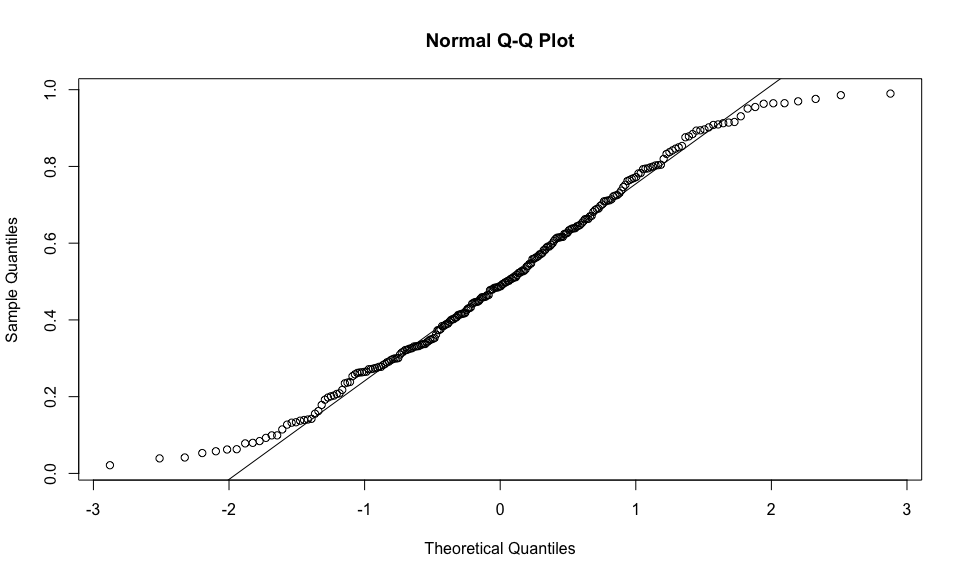}
        \caption{Normal Q-Q plot with $d=100$ and $s=5$.}
        \label{figsub:21}
    \end{subfigure}
\hfill
    \begin{subfigure}{0.49\linewidth}
        \includegraphics[width=\linewidth]{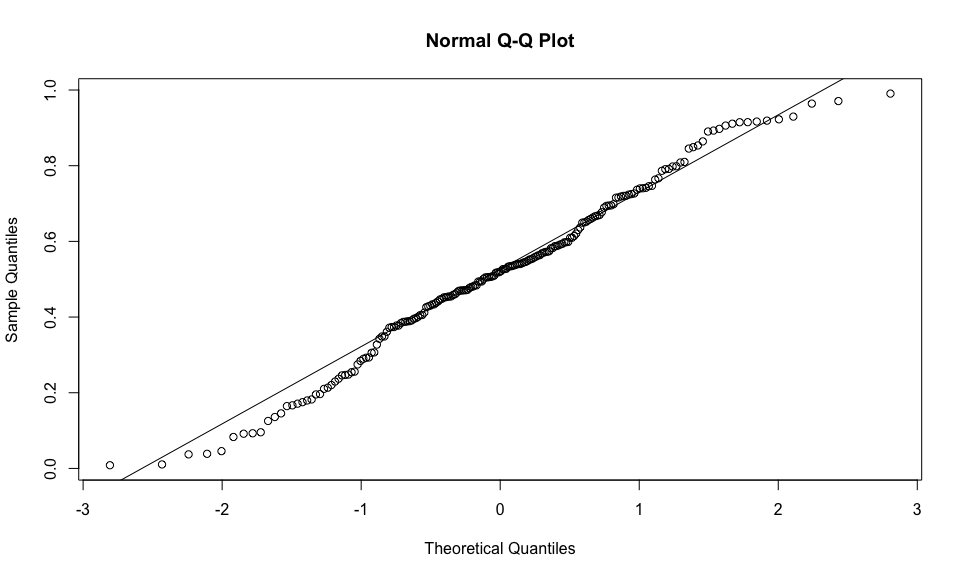}
        \caption{Normal Q-Q plot with $d=200$ and $s=2$.}
        \label{figsub:31}
    \end{subfigure}
    \hfill
    \begin{subfigure}{0.49\linewidth}
        \includegraphics[width=\linewidth]{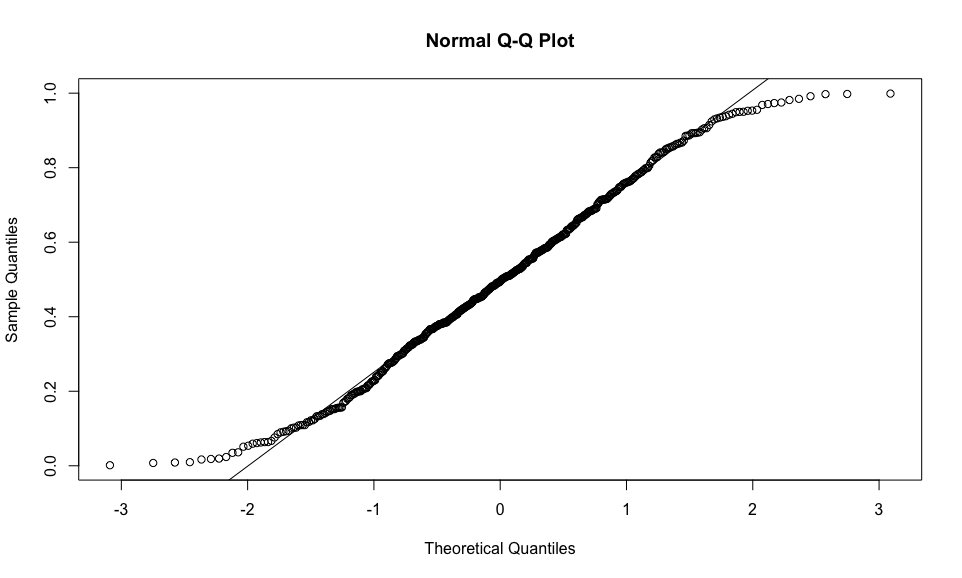}
        \caption{Normal Q-Q plot with $d=200$ and $s=5$.}
        \label{figsub:41}
    \end{subfigure}
    
    \caption{This figure presents four normal Q-Q plots for different (larger) values of $d$ (number of intervals) and $s$ (number of shuffles) in the Latin hypercube sampling (LHS) approach.}
    \label{fig:1234_larger}
\end{figure}

\vspace{2.0mm}

\vspace{2.0mm}

The fifth approach, which we call the \textit{structured adaptive approach}, starts with an initial set of weights before adapting them as required before terminating searches based on the solutions found. The method informally identifies \enquote{large gaps} between nondominated points (in terms of some distance metric) and then subdivides those regions to introduce finer resolution. In particular, we initially follow the uniform increment approach by dividing $[0,1]$ into $d$ equal subintervals, before then adapting our weights by subdividing (some of) our $d$ subintervals into $d$ equal subintervals and repeating until termination. Note that the decision regarding if a subinterval will be further subdivided will depend on the previously found nondominated points that correspond to the end points of the subinterval being considered and toleration threshold and redundancy bounding parameters $\tau\ge0$ and $\rho \in [0,1]$, respectively. 

\vspace{2.0mm}

Being more precise, suppose that we are considering if we should subdivide the interval $[a_1, a_2]$ as described above and that the corresponding weight pairs, namely $(a_1, 1-a_1)$ and $(a_2, 1-a_2)$, yield the nondominated points $\boldsymbol{n}_1$ and $\boldsymbol{n}_2$, respectively. In particular, one should subdivide only if 
$$
\|\boldsymbol{n}_1-\boldsymbol{n}_2\|_2 > \tau,
$$
where  $\| \cdot \|_2$ denotes the $\ell_2$-norm. Further, the overall approach should terminate if
$\mathcal{N}/\mathcal{D} < \rho$ 
holds, where $\mathcal{N}$ and $\mathcal{D}$ denote the total number of distinct nondominated points found and the total number of intervals searched, respectively. It should be noted that the second condition enforces termination when the overall level of redundancy has grown significantly. Observe that the stopping conditions introduced here mean that the running time of such an implementation will be intrinsically related to $\tau$ and $\rho$ and, as such, it is natural to consider what is a suitable \enquote{size} of $\tau$ and $\rho$ in order to avoid redundancy wherever possible. 

\subsection{\texorpdfstring{Weight Selection for $p \ge 3$ Objectives}{Weight Selection for p ≥ 3 Objectives}}
Notice that the setting with $p \ge 3$ objectives converts the original problem to \eqref{WSM_Definition}, where $\sum_{i=1}^p \lambda_i=1$ and $\lambda_i \ge 0$ for all $i \in \{1,2,\ldots,p\}$. The method once more successively varies the weights $\lambda_i$ in order to find weakly efficient solutions. It should be emphasised that while the weight selection methods introduced for bi-objective problems rely heavily on pairings and simple linear complements, extending these methods to cases with three or more objectives becomes inherently more complex. This increased complexity arises since the weight vectors must now satisfy a higher-dimensional simplex constraint, requiring the development of more generalised sampling strategies that effectively explore this space. 

\vspace{2.0mm}

The first approach, namely the uniform increment approach, similarly divides the range $[0,1]$ into $d$ equal subintervals for each weight. Here each weight $\lambda_i$ can take values from 
$\{0, 1/d, \ldots, 1\}$. Then we generate all possible combinations of weights $\{ \lambda_1, \lambda_2, \ldots, \lambda_p \}$ such that $\sum_{k=1}^p \lambda_i=1$ and $\lambda_i \ge 0$ for all $i \in \{1,2,\ldots,p\}$. This can be done systematically to ensure normalisation. The natural approach is to make use of a nested loop to iterate over all combinations of $\lambda_1, \lambda_2, \ldots, \lambda_{p-1}$, before calculating the remaining weight $\lambda_p$ to ensure that their sum is 1, where any combination with $\lambda_p \not\in [0,1]$ is discarded. This results in solving at most $(d+1)^{p-1}$ problems, as exemplified below. 

\vspace{2.0mm}

\begin{example}
Suppose that $p=3$ and $d=2$. Observe that $\lambda_1, \lambda_2, \lambda_3 \in \{0, \frac{1}2, 1\}$ in such case. Then we iterate over $\lambda_1$ and $\lambda_2$ before calculating $\lambda_3 = 1 - (\lambda_1+\lambda_2)$, namely:
\begin{enumerate}[style=unboxed, font=\normalfont]
    \item[1)] for $\lambda_1 = 0$:
        \begin{itemize}
                \item $\lambda_2 = 0$ and $\lambda_3 = 1 - (0+0) = 1$,
                \item $\lambda_2 = \frac{1}{2}$ and $\lambda_3 = 1 - (0+\frac{1}{2}) = \frac{1}{2}$, and
                \item $\lambda_2 = 1$ and $\lambda_3 = 1 - (0+1) = 0$.
        \end{itemize}
    \item[2)] for $\lambda_1 = \frac{1}{2}$:
        \begin{itemize}
                \item $\lambda_2 = 0$ and $\lambda_3 = 1 - (\frac{1}{2}+0) = \frac{1}{2}$,
                \item $\lambda_2 = \frac{1}{2}$ and $\lambda_3 = 1 - (\frac{1}{2}+\frac{1}{2}) = 0$, and
                \item $\lambda_2 = 1$ and $\lambda_3 = 1 - (\frac{1}{2}+1) = -\frac{1}{2}$, which is invalid as $\lambda_3 < 0$.
        \end{itemize}
    \item[3)] for $\lambda_1 = 1$:
        \begin{itemize}
                \item $\lambda_2 = 0$ and $\lambda_3 = 1 - (1+0) = 0$,
                \item $\lambda_2 = \frac{1}{2}$ and $\lambda_3 = 1 - (1+\frac{1}{2}) = -\frac{1}{2}$, which is invalid as $\lambda_3 < 0$, and
                \item $\lambda_2 = 1$ and $\lambda_3 = 1 - (1+1) = -1$, which is invalid as $\lambda_3 < 0$.
        \end{itemize}
\end{enumerate}
Thus, the valid combinations are
$$
\begin{aligned}
\Big\{ &\left(0,0,1\right), \left(0,\frac{1}{2},\frac{1}{2}\right), \left(0,1,0\right), \\
&\left(\frac{1}{2},0,\frac{1}{2}\right), \left(\frac{1}{2},\frac{1}{2},0\right), \left(1,0,0\right) \Big\}.
\end{aligned}
$$
\end{example}

It should be noted that the aforementioned upper bound of $(d+1)^{p-1}$ can be refined to instead state an explicit closed formula for the number of valid combinations. The number of valid combinations is
$$
\binom{d + p - 1}{p - 1} = \frac{(d+p-1)!}{(p-1)! \, d!},
$$
which is the binomial coefficient for integers $d+p-1$ and $p-1$, respectively. This follows immediately in light of the stars and bars theorem (or formula) (see e.g. \cite[Chapter 3]{beeler2015count}), which provides a way to count the number of nonnegative solutions to the equation 
$$
\lambda_1 + \lambda_2 + \cdots + \lambda_p = d.
$$

\vspace{2.0mm}

The rapid growth of this binomial coefficient as both $d$ and $p$ increase is demonstrated in Figure \ref{fig:binomial_growth}, which illustrates how the number of valid combinations grows exponentially as we vary $d$ (depth) and $p$ (number of objectives). The figure particularly highlights the steep growth rate for larger $p$, reflecting the complexity and computational effort required to generate and evaluate combinations as the dimensionality increases.

\vspace{2.0mm}

\begin{figure}[ht!] 
\centering
        \includegraphics[width=0.8\linewidth]{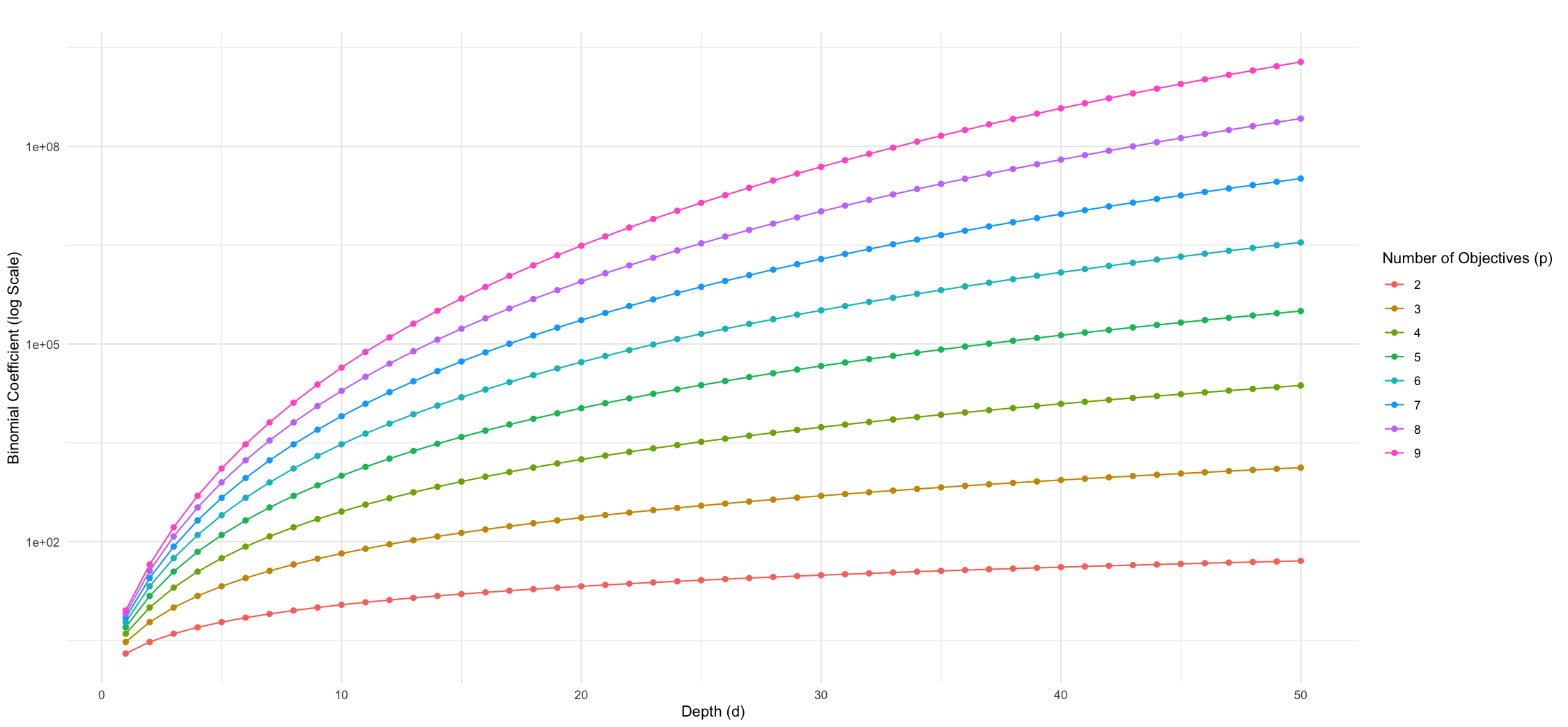}
    \caption{This figure illustrates the growth of the binomial coefficient $\binom{d + p - 1}{p - 1}$ on a logarithmic scale, as $d$ (depth) and $p$ (number of objectives) vary. The colours denote different values of $p$.}
    \label{fig:binomial_growth}
\end{figure}

\vspace{2.0mm}

The second approach, namely the random sampling approach, can be extended as follows. Since $p \ge 3$ by assumption, it is no longer to sufficient to sample only one $\lambda_i$. One natural extension relies on the Dirichlet distribution, which is the multivariate generalisation of the beta distribution. The Dirichlet distribution (of order $K \ge 2$) with parameters $\alpha_1, \alpha_2, \ldots, \alpha_K >0$ (which are denoted for convenience by $\boldsymbol{\alpha}$) has probability density function with respect to the $(K-1)$-th dimensional volume or Lebesgue measure (see e.g. \cite[Chapter 13]{bartle1995elements}) given by 
$$
f(x_1, x_2, \ldots, x_K ; \alpha_1, \alpha_2, \ldots, \alpha_K) = \frac{1}{B(\boldsymbol{\alpha})} \prod_{i=1}^K x_i^{\alpha_i-1}, 
$$
where $\sum_{i=1}^K x_i=1$ and $x_i \in [0,1]$ for all $i \in \{1,2,\ldots,K\}$, i.e. that $x_1, x_2, \ldots, x_K$ belong to the unit (or standard) $(K-1)$-dimensional simplex in $\mathbb{R}^K$, as illustrated in Figure \ref{fig:unit_simplex_R3}. The normalising constant of the Dirichlet distribution is the multivariate beta function, namely
$$
B(\boldsymbol{\alpha})=\frac{\displaystyle\prod_{i=1}^K \Gamma\left(\alpha_i\right)}{\Gamma\big(\displaystyle\sum_{i=1}^K \alpha_i\big)},
$$
where $\Gamma(\cdot)$ denotes the gamma function. 

\begin{figure}[ht!] 
\centering
        \includegraphics[width=0.8\linewidth]{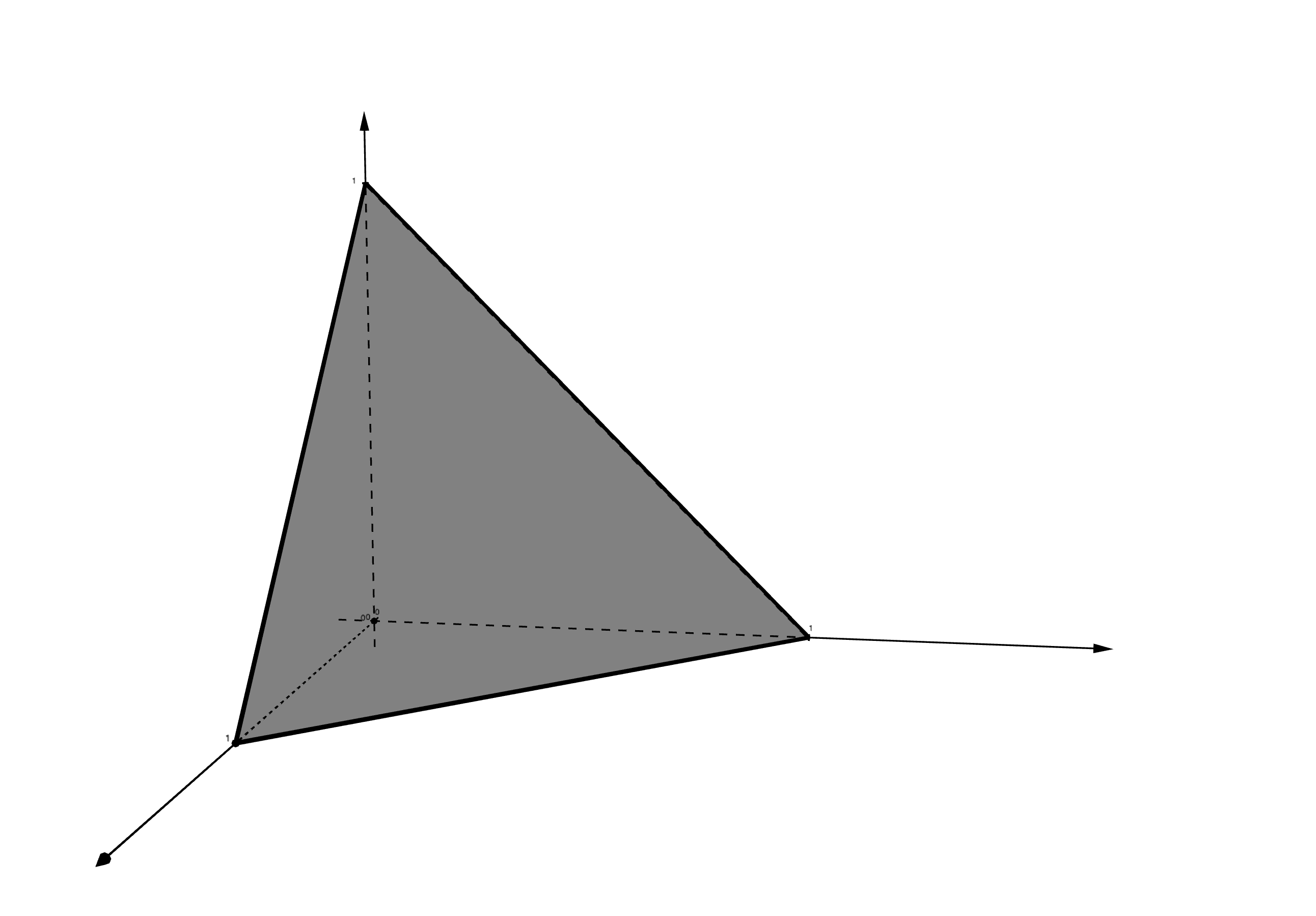}
        \label{fig:structured_adaptive_tuning_Figure2}
    \caption{This figure illustrates the 2-dimensional unit (or standard) simplex in $\mathbb{R}^3$, whose vertices are the 3 standard unit vectors in $\mathbb{R}^3$.}
    \label{fig:unit_simplex_R3}
\end{figure}

\vspace{2.0mm}

Note for completeness that the parameters $\alpha_i$ in the Dirichlet distribution play a crucial role in shaping the distribution of weights. Being more specific, each $\alpha_i$ for $i \in \{1,2,\ldots,K\}$ controls the expected concentration of $x_i$ within the unit simplex. Suppose, for simplicity, that $\boldsymbol{\alpha}$ is symmetric. If $\alpha_i > 1$, then the distribution tends to favour larger values of $x_i$, geometrically pushing the probability mass toward the centre of the unit simplex. If $\alpha_i < 1$, then the distribution tends to favour smaller values of $x_i$, concentrating the probability mass around the facets and vertices of the simplex. If $\alpha_i = 1$ for all $i \in \{1,2,\ldots,K\}$, then the distribution becomes uniform over the simplex. This is illustrated in Figure \ref{fig:Dir_Dist}. Thus, by carefully adjusting the parameters $\alpha_i$, it is possible to control how the weights are distributed while ensuring $\sum_{i=1}^K x_i=1$ and $x_i \in [0,1]$ for each $i$ hold as required.

\begin{figure}[ht!]
\centering
    \begin{subfigure}{0.49\linewidth}
        \includegraphics[width=\linewidth]{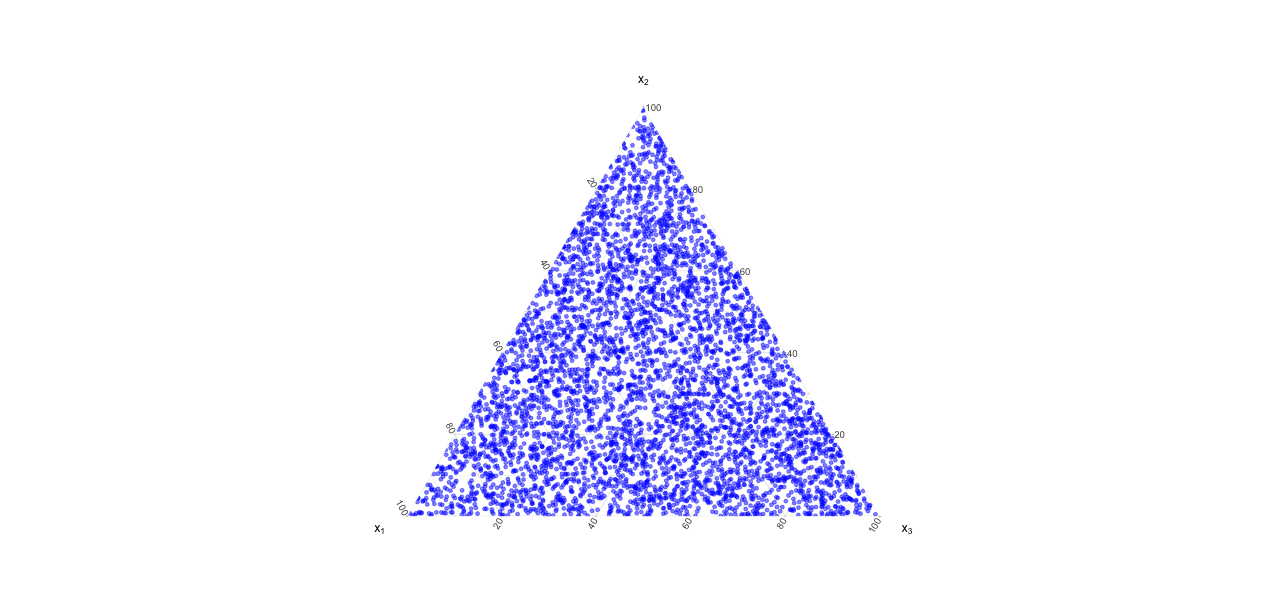}
        \caption{Dirichlet distribution with $\boldsymbol{\alpha} = (1,1,1)$}
        \label{fig:Dir_Dist1}
    \end{subfigure}
    \hfill
    \begin{subfigure}{0.49\linewidth}
        \includegraphics[width=\linewidth]{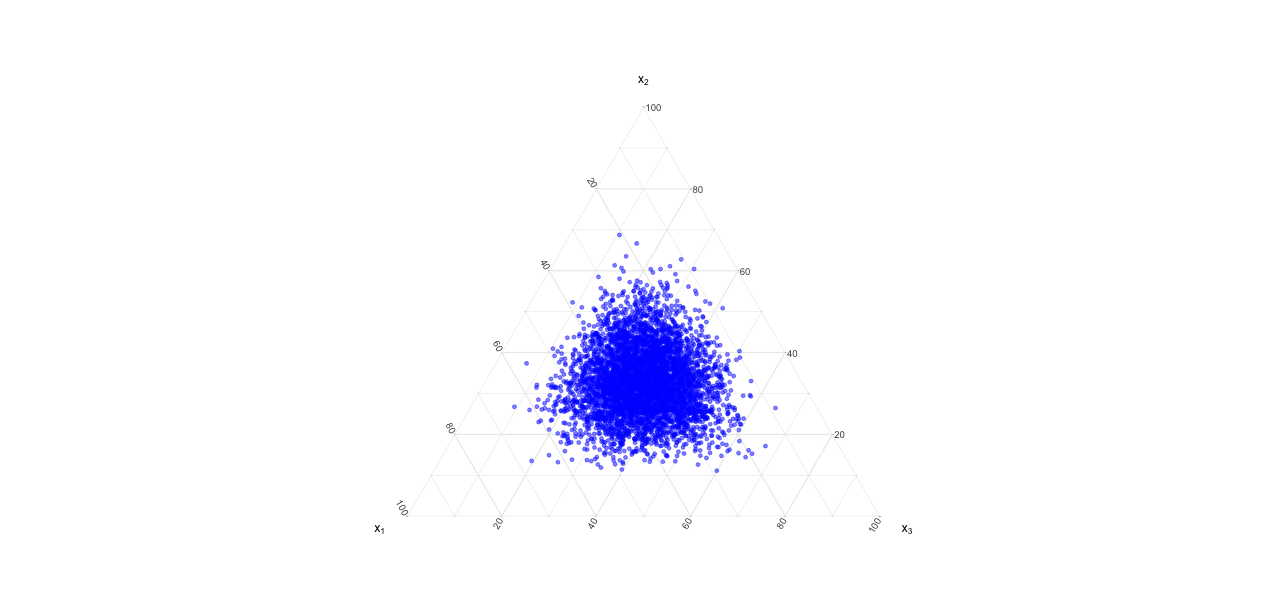}
        \caption{Dirichlet distribution with $\boldsymbol{\alpha} = (10,10,10)$}
        \label{fig:Dir_Dist2}
    \end{subfigure}

    \vspace{0.5em} 

    \begin{subfigure}{0.49\linewidth}
        \includegraphics[width=\linewidth]{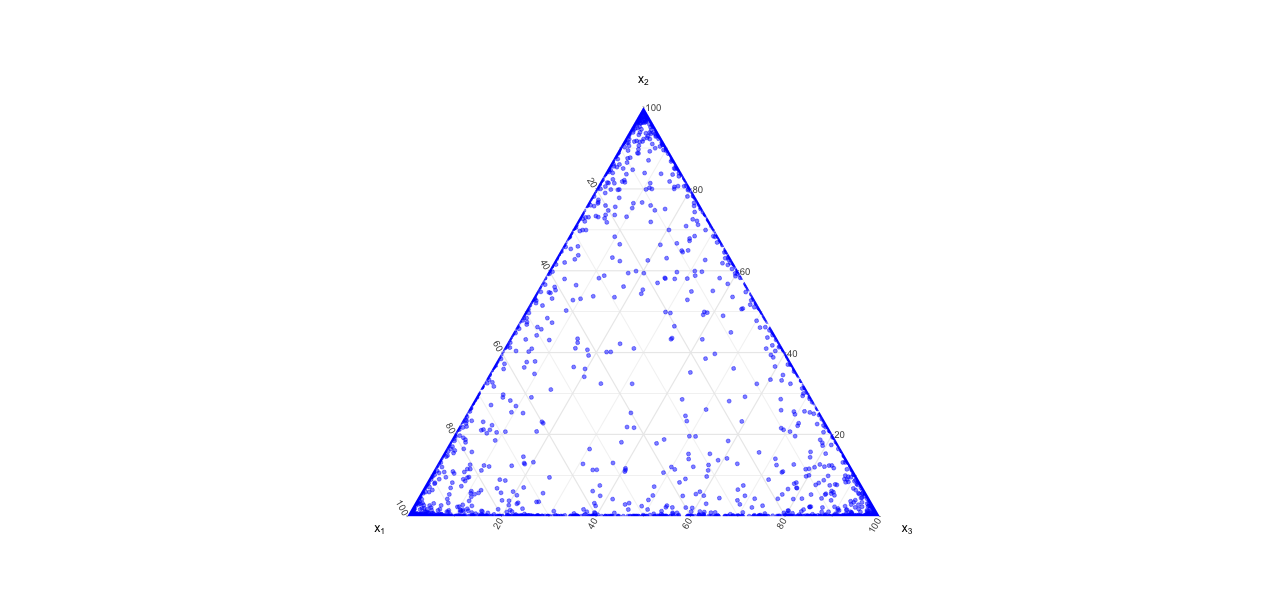}
        \caption{Dirichlet distribution with $\boldsymbol{\alpha} = (0.1,0.1,0.1)$}
        \label{fig:Dir_Dist3}
    \end{subfigure}
    \hfill
    \begin{subfigure}{0.49\linewidth}
        \includegraphics[width=\linewidth]{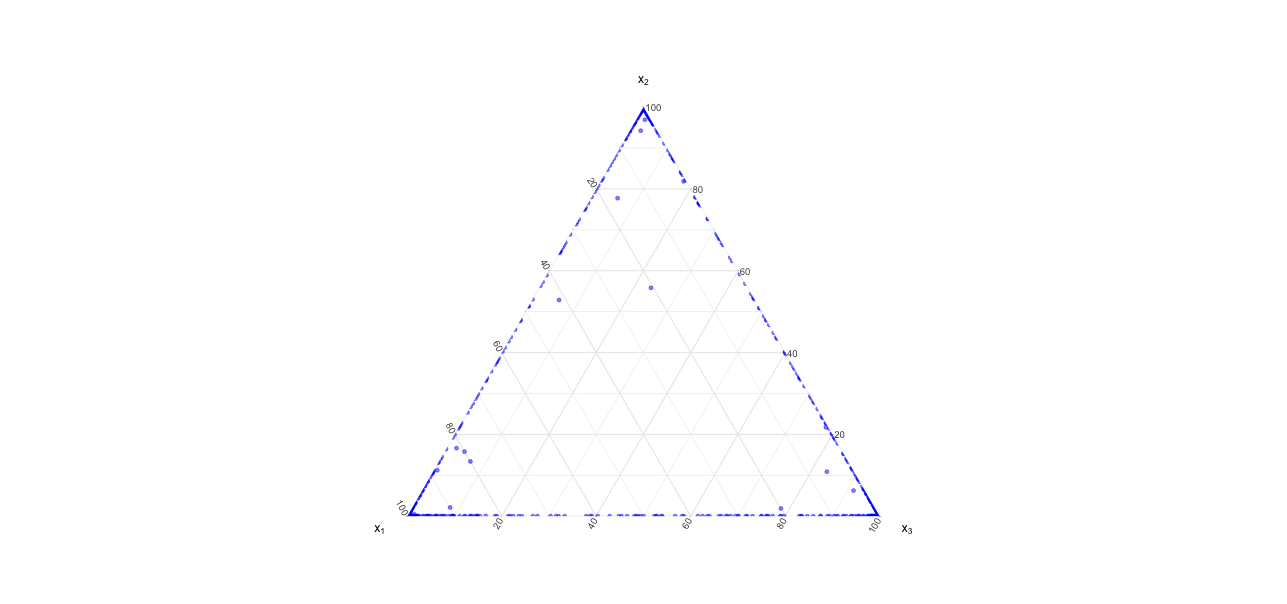}
        \caption{Dirichlet distribution with $\boldsymbol{\alpha} = (0.01,0.01,0.01)$}
        \label{fig:Dir_Dist4}
    \end{subfigure}
    
    \caption{This figure illustrates the Dirichlet distribution for symmetric $\boldsymbol{\alpha} = (\alpha_1, \alpha_2, \alpha_3)$ with $K=3$. Note that each plot features 5000 samples, where each point represents a sample consisting of three nonnegative components that sum to one.}
    \label{fig:Dir_Dist}
\end{figure}

\vspace{2.0mm}

The third approach, namely the LHS approach, can be extended as follows. The approach begins by similarly dividing the range $[0,1]$ into $d$ equal subintervals for each weight. Then we randomly sample one value from each interval for each weight, shuffle these samples to create combinations before normalising each combination such that the sum of the combined weights is 1. 

\vspace{2.0mm}

The fourth approach, namely the SLHS approach, can be extended as follows. The approach begins by similarly dividing the range $[0,1]$ into $d$ equal subintervals for each weight
, as in the LHS approach. Thus, we yield the $d$ subintervals 
\begin{equation} \label{0_1_d_division}
\left[0, \frac{1}{d}\right], \left[\frac{1}{d}, \frac{2}{d}\right], \ldots, \left[\frac{d-1}{d}, 1\right]
\end{equation}
with (set theoretic) union is $[0,1]$. 

\vspace{2.0mm}

Denote by $m_i \in [0,1]$ the midpoint of the subinterval $[a_i, b_i]$, i.e. 
$m_i = (a_i + b_i)/2$. Intuitively, we aim to select $p$ subintervals such that the sum of their midpoints is \enquote{close} to 1. We then collect samples from the subintervals, form $p$-element tuples of the sampled values from the selected subintervals, and finally normalise each combination such that the sum of the (normalised) weights is 1. To measure this \enquote{closeness}, let $\delta \ge 0$ be a parameter representing the allowable deviation of the sum of midpoints from 1. 

\vspace{2.0mm}

Thus, we require the selected subintervals to satisfy the inequalities
\begin{equation} \label{delta_upper_lower}
1 - \delta \le \sum_{i=1}^p m_i \le 1 + \delta, 
\end{equation}
where, to simplify notation, $m_1, m_2, \ldots, m_p$ denotes the midpoints of the $p$ selected subintervals from which we randomly sample.

\vspace{2.0mm}

Upon selecting the subintervals whose sum of midpoints is \enquote{close} to 1, we sample a value for each $\lambda_i \in [a_i, b_i]$ from each selected subinterval. Since the selected values of $\lambda_i$ are sampled from intervals centred around the midpoints $m_i$, we expect that their sum is \enquote{close} to the sum of the midpoints, namely that
$$
\sum_{k=1}^p \lambda_i \approx \sum_{k=1}^p m_k. 
$$

\vspace{2.0mm}

However, in contrast to the case $p=2$ (as shown in Lemma \ref{Lemma_post_normalise}), the samples will not necessarily remain within their corresponding grouped intervals upon normalisation. Despite this, it is possible to bound $\sum_{k=1}^p \lambda_k$ from above and below, allowing us to estimate the maximal variation after normalisation. 

\vspace{2.0mm}

Observe that
$$
\begin{aligned}
\sum_{k=1}^p \lambda_k &= \sum_{k=1}^p  \big(m_k + ( \lambda_k - m_k )\big) \\
    &= \sum_{k=1}^p  m_k + \sum_{k=1}^p \lambda_k - m_k \\
    & \le 1 + \delta + \sum_{k=1}^p \big| \lambda_k - m_k \big|, 
\end{aligned}
$$
where the last inequality holds since the subintervals were chosen to satisfy \eqref{delta_upper_lower}. Moreover, note that
$$
\big| \lambda_k - m_k \big| \le \frac{b_k - a_k}{2}.
$$
Thus, we obtain 
$$
\begin{aligned}
1 + \delta + \sum_{k=1}^p \big| \lambda_k - m_k \big| &\le 1 + \delta + \sum_{k=1}^p \frac{b_k -a_k}{2} \\
    &= 1 + \delta + p \cdot \frac{b_i - a_i}{2}
\end{aligned}
$$
for any subinterval $[a_i, b_i]$. Further, we note that 
$$
1 + \delta + p \cdot \frac{b_i - a_i}{2} = 1 + \delta + \frac{p}{2d}
$$
holds, which follows since $\frac{b_i-a_i}2 = \frac{1}{2d}$ for any interval $[a_i, b_i]$ from \eqref{0_1_d_division}.

\vspace{2.0mm}

A similar argument yields the lower bound
$$
1 - \left( \delta + \frac{p}{2d} \right) \le \sum_{k=1}^p \lambda_k.
$$
Thus, we conclude that
$$
1 - \left( \delta + \frac{p}{2d} \right) \le \sum_{k=1}^p \lambda_k \le 1 + \left( \delta + \frac{p}{2d} \right).
$$
This inequality tells us that the sum of the sampled values will be close to 1, with small deviations depending on how tightly the midpoints sum to 1 and the variability introduced by random sampling within each selected subinterval. Further, since the sum of samples is close to 1, the normalisation factor $1/\sum_{k=1}^p \lambda_k$ will be close to 1, ensuring that the normalised values $\lambda_i / \sum_{k=1}^p \lambda_k$ will remain close to original sampled values for each $i$.

\vspace{2.0mm}

The fifth approach, namely the structured approach, can be extended as follows. The central idea of this approach is to iteratively refine the sampling space by dividing it into structured subintervals, adapting the sample distribution to ensure better coverage of the decision space. The approach begins by following the uniform increment approach by dividing $[0,1]$ into $d$ equal subintervals for each weight. Recall that the uniform increment approach involves solving precisely 
$$
\binom{d + p - 1}{p - 1}
$$
subproblems for fixed $d$, which is upper bounded by $(d+1)^{p-1}$. It is natural therefore to select a small value for $d$ (such as $d=2$). Upon following the uniform increment approach, we then subdivide intervals adaptively based on the $\ell_2$-distance between the nondominated points in $\mathbb{R}^p$. In particular, we similarly use hyperparameters $\tau\ge0$ and $\rho \in [0,1]$ to define our toleration distance threshold and redundancy bounding parameters, which control the subdivision and termination criteria, respectively. 

\vspace{2.0mm}

During the development and application of the proposed approaches, over 200 stakeholders were consulted across various sectors, including energy, transport, and logistics. All proposed methods reported high acceptance and usability ratings, with over 92\% of stakeholders indicating confidence in the outputs and ease of interpretation.


\section{Conclusion}
In this paper, we presented a range of techniques for selecting weights in the weighted sum scalarisation method for solving multi-criteria decision making problems. We explored both systematic and random sampling methods, offering an initial framework for generating weights efficiently. While the uniform increment approach provides a straightforward solution for problems of smaller dimension, its scalability is limited due to redundancy caused by the superlinear growth in both $d$ (depth) and $p$ (number of objectives). In contrast, random sampling and Dirichlet-based methods show promise for higher-dimensional problems, though their ability to ensure comprehensive coverage of the decision space warrants further study. Structured sampling methods, such as structured Latin hypercube sampling (SLHS), offer more control over weight selection and mitigate redundancy, yet a formal comparison to simpler approaches like random sampling is necessary to assess their practical performance.

\vspace{2.0mm}

There should be a focus in future research on extensive computational testing to evaluate the efficiency, scalability, and redundancy of the proposed sampling methods across a variety of multi-criteria decision making problems. In addition, it will be valuable to investigate how these techniques compare to other scalarisation methods (e.g., $\varepsilon$-constraint \cite{Haimes1971}, hybrid \cite{Guddat1985}, Benson's Method \cite{benson1978existence}, or the elastic constraint method (see e.g. \cite{Ehrgott2002, tanino2003method, holder2003designing}), particularly in non-convex settings, where weighted sum approaches are known to encounter challenges. Furthermore, the development of hybrid sampling techniques that combine the strengths of different strategies offers a promising direction to enhance the coverage and efficiency of weight selection, especially for complex, high-dimensional decision-making scenarios.

\vspace{2.0mm}

Finally, it is worth noting that this work is grounded in the development of explainable (non-black-box) optimisation methods, a feature increasingly demanded by both stakeholders and policy regulations. The proposed approaches were evaluated and refined with feedback from over 200 stakeholders, with over 92\% indicating high confidence in the results and interpretability. As such, the methods presented can be viewed as off-the-shelf, transparent tools for tackling MCDM problems across real-world domains.




\end{document}